\providecommand{\U}[1]{\protect\rule{.1in}{.1in}}
\newtheorem{theorem}{Theorem}
\newtheorem{conjecture}[theorem]{Conjecture}
\newtheorem{corollary}[theorem]{Corollary}
\newtheorem{definition}[theorem]{Definition}
\newtheorem{lemma}[theorem]{Lemma}
\newenvironment{proof}[1][Proof]{\noindent\textbf{#1.} }{\ \rule{0.5em}{0.5em}}
\begin{document}

\title{On inverse Wiener interval problem of trees}
\author{Jelena Sedlar\\University of Split, Faculty of civil engineering, architecture and geodesy, \\Matice hrvatske 15, HR-21000, Split, Croatia.}
\maketitle

\begin{abstract}
The Wiener index $W(G)$ of a simple connected graph $G$ is defined as the sum
of distances over all pairs of vertices in a graph. We denote by
$W[\mathcal{T}_{n}]$ the set of all values of Wiener index for a graph from
the class $\mathcal{T}_{n}$ of trees on $n$ vertices. The largest interval of
contiguous integers (contiguous even integers in case of odd $n$) contained in
$W[\mathcal{T}_{n}]$ is denoted by $W^{int}[\mathcal{T}_{n}].$ In this paper
we prove that both sets are of cardinality $\frac{1}{6}n^{3}+O(n^{2})$ in the
case of even $n,$ while in the case of odd $n$ we prove that the cardinality
of both sets equals $\frac{1}{12}n^{3}+O(n^{2})$ solving thus two conjectures
posed in literature.

\end{abstract}

%

{\bf Keywords:}
Wiener index, Wiener inverse interval problem, Tree.%
%

{\bf AMS Subject Classifcation:} 05C35%

\section{Introduction}

The Wiener index of a connected graph $G$ is defined as the sum of distances
over all pairs of vertices, i.e.%
\[
W(G)=\sum_{u,v\in V(G)}d(u,v).
\]
It was first introduced in \cite{Wiener(1946)} and it was used for predicting
the boiling points of paraffins. Since the index was very succesful many other
topological indices were introduced later which use distance matrix of a
graph. There is a recent survey by Gutman et al. \cite{GutmanSurvey(2014)} in
which finding extremal values and extremal graphs for the Wiener index and
several of it variations is nicely presented. Given the class of all simple
connected graphs on $n$ vertices it is easy to establish extremal graphs for
the wiener index, those are complete graph $K_{n}$ and path $P_{n}$. The same
holds for the class of tree graphs on $n$ vertices in which minimal tree is
the star $S_{n}$ and the maximal tree is path $P_{n}.$ Many other bounds on
Wiener index are also established in literature.

In \cite{GutmanYeh(1995)} Gutman and Yeh proposed the inverse Wiener index
problem, i.e. for a given value $w$ the problem of finding a graph (or a tree)
$G$ for which $W(G)=w.$ The first attempt at solving the problem was made in
\cite{LepovicGutman(1998)} where integers up to $1206$ were checked and $49$
integers were found that are not Wiener indices of trees. In
\cite{BanBeregMustafa(2004)} it was computationally proved that for all
integers $w$ on the interval from $10^{3}$ to $10^{8}$ there exists a tree
with Wiener index $w$. The problem was finally fully solved in 2006 when two
papers were published solving the problem independently. It was proved in
\cite{Wang(2006)} that for every integer $w>108$ there is a caterpillar tree
$G$ such that $W(G)=w$. The other proof is from the paper \cite{Wagner(2006)}
where it was proved that all integers except those 49 are Wiener indices of
trees with diameter at most $4$.

A related question is to ask what value of the Wiener index can a graph (or a
tree) $G$ on $n$ vertices have? In order to clarify further this problem one
may also ask how many such values are there, how are they distributed along
the related interval or how many of them are contiguous. In
\cite{KrncSkrekovski(2016)} this problem is named the Wiener inverse interval
problem (see also a nice recent survey \cite{KnorSkrekovskiTepeh(2016)} which
covers the topic). In that paper the set $W[\mathcal{G}_{n}]$ is defined as
the set of all values of Wiener index for graphs $G\in\mathcal{G}_{n},$ where
$\mathcal{G}_{n}$ is a class of simple connected graphs on $n$ vertices.
Similarly, $W[\mathcal{T}_{n}]$ is defined as the set of values $W(G)$ for all
trees on $n$ vertices ($\mathcal{T}_{n}$ denotes the class of trees on $n$
vertices). Also, $W^{int}[\mathcal{G}_{n}]$ (or analogously $W^{int}%
[\mathcal{T}_{n}]$) is defined to be the largest interval of contiguous
integers such that $W^{int}[\mathcal{G}_{n}]\subseteq W[\mathcal{G}_{n}]$ (or
analogously $W^{int}[\mathcal{T}_{n}]\subseteq W[\mathcal{T}_{n}]$). In
\cite{KrncSkrekovski(2016)} the Wiener inverse interval problem on class
$\mathcal{G}_{n}$ was considered, while for the same problem on $\mathcal{T}%
_{n}$ following two conjectures were made.

\begin{conjecture}
\label{Con1}The cardinality of $W[\mathcal{T}_{n}]$ equals $\frac{1}{6}%
n^{3}+O(n^{2})$.
\end{conjecture}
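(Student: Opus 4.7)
The plan is to establish matching upper and lower bounds of order $\tfrac{1}{6}n^{3}$.

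\textbf{Upper bound.} I would invoke the well-known extremal fact that among $\mathcal{T}_{n}$ the star minimizes and the path maximizes the Wiener index, with $W(S_{n})=(n-1)^{2}$ and $W(P_{n})=\binom{n+1}{3}$. Since every attained value is an integer in $[(n-1)^{2},\binom{n+1}{3}]$, one immediately obtains
\[
|W[\mathcal{T}_{n}]|\le \binom{n+1}{3}-(n-1)^{2}+1=\tfrac{1}{6}n^{3}+O(n^{2}).
\]

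\textbf{Lower bound.} I would construct a family of caterpillars realising $\tfrac{1}{6}n^{3}+O(n^{2})$ pairwise distinct Wiener indices. Representing a caterpillar by its spine $v_{1}\cdots v_{d}$ together with a pendant-count vector $(a_{1},\ldots,a_{d})$ satisfying $\sum a_{i}=n-d$, the Wiener index becomes a low-degree polynomial in $d$ and the $a_{i}$'s, and two local moves -- sliding a pendant from $v_{i}$ to $v_{i+1}$, and lengthening the spine by one at the cost of a pendant -- perturb $W$ by explicit, small, tunable amounts. My plan is to (i) for each admissible spine length $d$ iterate the sliding move to produce an arithmetic progression of $W$-values spanning essentially the range between the $d$-long ``broom'' and the $d$-long path-with-extra-leaves; (ii) take the union over $d$ and argue that the resulting intervals overlap in a controlled way, ultimately furnishing at least $\tfrac{1}{6}n^{3}+O(n^{2})$ distinct values.

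\textbf{Main obstacle.} The lower bound is where the real work lies, and its chief subtlety is parity. A short computation using the edge form $W(T)=\sum_{e}n_{1}(e)(n-n_{1}(e))$ shows that when $n$ is odd exactly one of $n_{1}(e),n-n_{1}(e)$ is even for every edge, so every $W(T)$ is forced to be even; at most half of the integers in $[(n-1)^{2},\binom{n+1}{3}]$ are then attainable. Consequently the stated asymptotics $\tfrac{1}{6}n^{3}+O(n^{2})$ can hold literally only for even $n$, whereas for odd $n$ the best possible answer -- and, as the abstract announces, the one actually proved in the paper -- drops to $\tfrac{1}{12}n^{3}+O(n^{2})$. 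Beyond parity, the remaining technical challenge is to engineer the slide/lengthen moves so that they both advance $W$ in the smallest possible (parity-respecting) steps and generate families whose $W$-ranges across different spine lengths $d$ do not collapse onto each other, which is what makes the counting come out to the claimed order.
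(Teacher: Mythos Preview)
Your upper bound and your parity observation are both correct and match the paper exactly; in fact your one-line parity argument via $W(T)=\sum_{e}n_{1}(e)\,(n-n_{1}(e))$ is cleaner than the paper's inductive proof of the same fact (Theorem~\ref{tm_WienerEven}).

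The lower bound, however, is only a plan, and the specific mechanism you propose is not the one that actually works in the paper. A single-leaf slide from $v_{i}$ to $v_{i+1}$ changes $W$ by an amount that depends on $i$ (roughly, on how many vertices lie on each side of the edge $v_{i}v_{i+1}$), so it does \emph{not} directly yield an arithmetic progression; controlling those variable increments well enough to fill an interval of length $\Theta(n^{3})$ is precisely the difficulty you flag but do not resolve. The paper's key device is different: a \emph{symmetric} move (``Transformation~A'') that simultaneously pushes two leaves outward from a degree-$4$ spine vertex to its two non-leaf neighbours, and this always increases $W$ by exactly~$4$, independent of position. Iterating it on a carefully balanced caterpillar yields a Wiener $4$-complete interval of length $4k^{2}$ with $k=\Theta(n)$. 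To upgrade $4$-complete to $1$-complete (even $n$) or $2$-complete (odd $n$), the paper adds one or two extra pendants at spine positions $u_{s}$ with $s\in\{-1,0,1,2\}$, producing offsets of $0,1,3,6$ that cover all residues mod~$4$. Finally, two families of base caterpillars ($B_{1}$ for long spines, $B_{2}$ for short spines) are glued at the boundary $d\approx n/4$ so that the $4$-complete intervals for consecutive parameters $x$ and $d$ overlap.

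In short: your outline is sound and your diagnosis of the main obstacle is accurate, but the missing idea is the constant-increment symmetric move. Without it, the ``arithmetic progression'' step in your plan is not justified.
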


\begin{conjecture}
\label{Con2}The cardinality of $W^{int}[\mathcal{T}_{n}]$ equals $O(n^{3})$.
\end{conjecture}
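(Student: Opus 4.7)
The statement to establish comprises both Conjecture \ref{Con1} and Conjecture \ref{Con2}, strengthened so that the contiguous-interval quantity $W^{int}[\mathcal{T}_n]$ is asymptotically as large as $W[\mathcal{T}_n]$ itself; concretely one wants $|W[\mathcal{T}_n]|=|W^{int}[\mathcal{T}_n]|=\tfrac{1}{6}n^{3}+O(n^{2})$ for even $n$ and $\tfrac{1}{12}n^{3}+O(n^{2})$ for odd $n$. The plan is to prove matching upper and lower bounds on both quantities simultaneously by an explicit construction, so that $W^{int}$ gets pinned from below by the same construction that fills $W$.

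\textbf{Upper bound.} Since $W(S_n)=(n-1)^{2}$ and $W(P_n)=\binom{n+1}{3}$ are the extremal values in $\mathcal{T}_n$, one has $W[\mathcal{T}_n]\subseteq[(n-1)^{2},\binom{n+1}{3}]$, an interval of length $\tfrac{1}{6}n^{3}+O(n^{2})$. For odd $n$ the identity $W(T)=\sum_{e\in E(T)}n_{1}(e)\cdot(n-n_{1}(e))$, combined with the observation that exactly one of the two factors is even, forces $W(T)$ itself to be even and halves the upper bound to $\tfrac{1}{12}n^{3}+O(n^{2})$.

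\textbf{Lower bound via caterpillars.} To match the upper bound I would work inside the class of caterpillars $C(a_{1},\dots,a_{k})$, where $k\ge 2$ is the spine length and $a_{i}\ge 0$ is the number of pendants attached to the $i$-th spine vertex, $\sum a_{i}=n-k$. Caterpillars admit a clean closed form for $W$ as a quadratic polynomial in the $a_{i}$ plus a combinatorial term in $k$, and they already suffice for the inverse Wiener problem (cf.\ \cite{Wang(2006)}). I would then introduce two elementary operations: a \emph{coarse} move that varies $k$ and shifts $W$ by $\Theta(n^{2})$, producing $\Theta(n)$ bands that jointly cover $[(n-1)^{2},\binom{n+1}{3}]$; and a \emph{fine} move that transfers a single pendant between two adjacent spine vertices, whose effect on $W$ equals $\pm(L-R)$, where $L,R$ are the vertex counts on the two sides of the spine edge crossed, and can be made equal to $1$ (resp.\ $2$) by choosing the position so that $|L-R|=1$ (resp.\ $2$). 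Iterating the fine move within a fixed spine length yields an arithmetic progression of $W$-values with common difference $1$ (even $n$) or $2$ (odd $n$) and length $\Theta(n^{2})$, and aligning these progressions across all $k$ will produce a single contiguous (sub)interval of length $\tfrac{1}{6}n^{3}-O(n^{2})$ (resp.\ $\tfrac{1}{12}n^{3}-O(n^{2})$), proving both conjectures at once.

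\textbf{Main obstacle.} The delicate step is the \emph{no-gap} analysis between consecutive bands: one must pin down explicit formulas for $\min_{(a_{i})}W(C(a_{1},\dots,a_{k}))$ and $\max_{(a_{i})}W(C(a_{1},\dots,a_{k}))$ for each $k$, and show that as $k$ changes by $1$ the new band reaches, or overlaps within one fine step of, the old one. This reduces to verifying a uniform quadratic inequality in $k$ between the two closed-form expressions above. A secondary difficulty is the odd-$n$ case, where one must additionally check that the parities of the band endpoints are compatible across changes of $k$ so that the contiguous progression of even integers does not split at a band boundary; if necessary this would be patched by a small auxiliary move (e.g.\ swapping a pendant between the two ends of the spine) used to correct parity without leaving the band. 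Once these inequalities and parity conditions are verified, the constructed family yields both $|W[\mathcal{T}_n]|$ and $|W^{int}[\mathcal{T}_n]|$ within $O(n^{2})$ of the trivial upper bound, settling both conjectures simultaneously.
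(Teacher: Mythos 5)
Your upper bound is fine, and your parity argument for odd $n$ via $W(T)=\sum_{e}n_{1}(e)\,(n-n_{1}(e))$ is correct and in fact cleaner than the paper's induction (the paper deletes two leaves and tracks parities case by case). The genuine gap is in the lower bound, at the sentence ``Iterating the fine move within a fixed spine length yields an arithmetic progression of $W$-values with common difference $1$ \ldots and length $\Theta(n^{2})$.'' A single pendant transfer across a spine edge changes $W$ by $(|A|-1)-|B|=2|A|-n-1$, where $A$ is the side the pendant leaves; so one move of increment exactly $+1$ can indeed be arranged. But the increment is configuration-dependent, and the move itself destroys the configuration that produced it: after the transfer the balance across that edge shifts by $2$, so the very next transfer across the same edge (in either direction) gives $-1$, not $+1$. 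To obtain $\Theta(n^{2})$ consecutive $+1$ steps you need, at every stage, some spine edge with imbalance exactly $1$ carrying an available pendant on the correct side, and an explicit schedule certifying this; nothing in your sketch supplies that schedule, and it is precisely the hard combinatorial content of the problem, not a routine verification. The same issue resurfaces at the band-gluing step, where the progressions must not merely overlap in value but mesh arithmetically.

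The paper circumvents exactly this difficulty with a different mechanism. Its Transformation A moves \emph{two} pendants simultaneously from a degree-$4$ spine vertex to its two neighbours, in opposite directions, so the configuration-dependent contributions cancel and the increment is exactly $+4$ irrespective of position; on caterpillars designed with a long run of degree-$3$ spine vertices (the families $B_{1},B_{2}$) it can be iterated $k^{2}=\Theta(n^{2})$ times with no bookkeeping. The residues modulo $4$ are then filled not by the iterated move but by a separate finite offset parameter: an extra leaf at position $s\in\{-1,0,1,2\}$ shifts $W$ by $3,0,1,6$ respectively, covering all residue classes (all even residues, in the odd-$n$ case, where only $s\in\{0,1\}$ and a shift of $2$ are needed). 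Two further points of contrast: the paper needs \emph{two} caterpillar families because the capacity $4k^{2}$ of the iterated move exceeds the inter-band gap $\approx 2n$ only for $d$ in a middle range, roughly $\tfrac{1}{2}\sqrt{2n}\lesssim d\lesssim\tfrac{1}{2}(n-\sqrt{2n})$; and this restriction costs $O(n^{5/2})$, so the paper's lower bound is $\tfrac{1}{6}n^{3}-O(n^{5/2})$ (resp.\ $\tfrac{1}{12}n^{3}-O(n^{5/2})$), whereas your claimed error term $O(n^{2})$ is likely unattainable by this kind of construction and is in any case unnecessary for the conjectures. If you wish to salvage the single-pendant approach, you must construct the transfer schedule explicitly; the cancellation trick behind Transformation A is the clean way to avoid having to do so.
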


In this paper we will consider these two conjectures. First, we will prove
that for a tree $G$ on odd number of vertices vertices $n$ the value $W(G)$
can be only even number. That means that the inverse Wiener interval problem
in that case has to be reformulated as the problem of finding the largest
interval of contiguous even integers such that $W^{int}[\mathcal{T}%
_{n}]\subseteq W[\mathcal{T}_{n}].$ Since $\left\vert W[\mathcal{T}%
_{n}]\right\vert \leq W(P_{n})-W(S_{n})=\frac{1}{6}n^{3}-n^{2}+\frac{11}%
{6}n-1,$ we now conclude that the cardinality of $W[\mathcal{T}_{n}]$ in the
case of odd $n$ can be at most $\frac{1}{12}n^{3}+O(n^{2}).$ Given that
reformulation, we will prove both conjectures to be true. Even more, we will
prove the strongest possible version of Conjecture \ref{Con2} by proving that
$\left\vert W^{int}[\mathcal{T}_{n}]\right\vert $ also equals $\frac{1}%
{6}n^{3}+O(n^{2})$ (i.e. $\frac{1}{12}n^{3}+O(n^{2})$ in case of odd $n$)
which is the best possible result given the upper bound on $\left\vert
W[\mathcal{T}_{n}]\right\vert $ derived from $W(P_{n})-W(S_{n}).$

The present paper is organised as follows. It the next section basic
definitions and preliminary results are given. In the third section the
problem is solved for trees on even number of vertices, while in the fourth
section the problem is solved for trees on odd number of vertices.

\section{Preliminaries}

Let $G=(V(G),E(G))$ be a simple connected graph having $n=\left\vert
V(G)\right\vert $ vertices and $m=\left\vert E(G)\right\vert $ edges. For a
pair of vertices $u,v\in V(G)$ we define the distance $d_{G}(u,v)$ as the
length of the shortest path connecting $u$ and $v$ in $G.$ For a vertex $u\in
V$ the degree $\delta_{G}(u)$ is defined as the number of neighbors of vertex
$u$ in graph $G.$ When it doesn't lead to confusion we will use abbreviated
notation $d(u,v)$ and $\delta(u).$ Also, for a vertex $u\in V(G)$ and a set of
vertices $A\subseteq V(G)$ we will denote $d(u,A)=\sum_{v\in A}d(u,v).$ We say
that a vertex $u\in V(G)$ is a leaf if $\delta_{G}(u)=1,$ othervise we will
say that $u$ is interior vertex of a graph $G.$ We say that a vertex $u\in
V(G)$ is a petal if it has a leaf for a neighbor. A graph $G$ which does not
contain cycles is called a tree. We say that a tree $G$ is a caterpillar tree
if all its interior vertices induce a path. Such path will be called interior
path of a caterpillar. Let $a$ and $b$ be positive integers such that $a\leq
b.$ We say that the interval $[a,b]$ is Wiener $p-$complete if there is a tree
$G$ in $\mathcal{T}_{n}$ such that $W(G)=a+pi$ for every $i=0,\ldots
,\left\lfloor \frac{b-a}{p}\right\rfloor .$ We say that the interval $[a,b]$
is Wiener complete if it is Wiener $1-$complete.

Let us now prove that the value of the Wiener index for a tree on odd number
of vertices is even number.

\begin{theorem}
\label{tm_WienerEven}Let $G$ be a tree on odd number of vertices $n\geq3.$
Then $W(G)$ is even number.
\end{theorem}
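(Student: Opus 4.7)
The plan is to use the standard edge-contribution formula for the Wiener index of a tree. For any tree $G$ on $n$ vertices and any edge $e\in E(G)$, removing $e$ disconnects $G$ into two components of sizes $n_1(e)$ and $n_2(e)$ with $n_1(e)+n_2(e)=n$. Every pair of vertices lying in opposite components contributes exactly once to $W(G)$ through a path that uses $e$, and one obtains
$$W(G)=\sum_{e\in E(G)} n_1(e)\cdot n_2(e).$$
This identity is classical and I would either cite it or prove it in one line by exchanging the order of summation $\sum_{\{u,v\}} d(u,v)=\sum_{\{u,v\}}\sum_{e\in P(u,v)} 1 =\sum_{e}|\{\{u,v\}:e\in P(u,v)\}|$, where $P(u,v)$ denotes the unique $u$--$v$ path in the tree.

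The parity argument is then immediate. Suppose $n$ is odd. For every edge $e$, the sum $n_1(e)+n_2(e)=n$ is odd, so exactly one of the integers $n_1(e),n_2(e)$ is even and the other is odd; their product is therefore even. Since every summand in $\sum_{e} n_1(e)n_2(e)$ is even, the sum itself is even, giving $W(G)\equiv 0\pmod 2$.

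There is essentially no obstacle here: the whole argument rests on the single observation that the parity of $n_1(e)n_2(e)$ is forced once $n$ is odd. The only thing worth being careful about is stating the edge-contribution formula cleanly (making sure the sum over unordered pairs matches the sum over edges without a spurious factor of two), but this is entirely routine. No induction, case analysis, or structural assumption beyond $G$ being a tree is required.
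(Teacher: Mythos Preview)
Your argument is correct. The edge-contribution formula $W(G)=\sum_{e\in E(G)} n_1(e)n_2(e)$ is indeed classical for trees, and once $n$ is odd every summand $n_1(e)n_2(e)$ is even for the reason you give. Nothing is missing.

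The paper, however, takes a different route: it proceeds by induction on $n$, deleting two leaves $u_1,u_2$ (with neighbouring petals $v_1,v_2$) to drop to a tree $G'$ on $n-2$ vertices, writes
\[
W(G)=W(G')+d(u_1,V')+d(u_2,V')+d(u_1,u_2),
\]
and then does a case analysis on the parity of $d(v_1,v_2)$ to show that the added terms have even sum. Compared with your proof, this is longer and requires tracking parities through two cases, but it is self-contained in that it never invokes the edge-contribution identity. Your approach trades that induction and case split for a single appeal to the formula $W(G)=\sum_e n_1(e)n_2(e)$; once that formula is available (and your one-line double-counting justification is fine), the parity conclusion is immediate and uniform over all edges. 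In short: same theorem, but your proof is shorter and more conceptual, while the paper's is more hands-on and avoids quoting the auxiliary identity.
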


\begin{proof}
The proof is by induction on $n.$ The only tree on $n=3$ vertices is $P_{3}$
for which it holds that $W(P_{3})=4$ which is even number. Let $G$ be a tree
on $n>3$ vertices where $n$ is an odd integer. Note that $G$ has at least two
leafs $u_{1}$ and $u_{2}$ which are neighboring to petals $v_{1}$ and $v_{2}$
(it can happen $v_{1}=v_{2}).$ Let $G^{\prime}$ be a tree on $n-2$ vertices
obtained by deleting leafs $u_{1}$ and $u_{2}.$ By induction hypotesis we know
that $W(G^{\prime})$ is even number. Now, note that%
\[
W(G)=W(G^{\prime})+d(u_{1},V^{\prime})+d(u_{2},V^{\prime})+d(u_{1},u_{2})
\]
where $V^{\prime}=V(G^{\prime}).$ Let $P$ be the path connecting vertices
$v_{1}$ and $v_{2}$ in $G^{\prime}.$ For any vertex $v\in V^{\prime}$ let $w$
be the vertex on $P$ which is closest to $v$ (i.e. $d(v,w)=\min\{d(v,x):x\in
V(P)\}).$ Note that $d(v,v_{1})+d(v,v_{2})=2d(v,w)+d(w,v_{1})+d(w,v_{2}%
)=2d(v,w)+d(v_{1},v_{2}).$ We now distinguish two cases.

CASE I. Suppose $d(v_{1},v_{2})$ is even. In that case $d(v,v_{1})+d(v,v_{2})$
is even for every $v\in V^{\prime}$ which implies $d(v,u_{1})+d(v,u_{2}%
)=d(v,v_{1})+1+d(v,v_{2})+1$ is also even for every $v\in V^{\prime}.$
Therefore $d(u_{1},V^{\prime})+d(u_{2},V^{\prime})$ must also be even. Further
note that $d(u_{1},u_{2})=d(v_{1},v_{2})+2$ is also even in this case. Since
$W(G^{\prime})$ is even by induction hypothesis, we conclude that $W(G)$ must
be even too.

CASE\ II. Suppose $d(v_{1},v_{2})$ is odd. Then $d(v,v_{1})+d(v,v_{2})$ is odd
for every $v\in V^{\prime}$ which further implies $d(v,u_{1})+d(v,u_{2})$ is
also odd for every $v\in V^{\prime}$. Since there is odd number of vertices in
$V^{\prime},$ we conclude that $d(u_{1},V^{\prime})+d(u_{2},V^{\prime})$ must
also be odd number. Also, note that $d(u_{1},u_{2})=d(v_{1},v_{2})+2$ is also
odd because $d(v_{1},v_{2})$ is odd. Therefore, $d(u_{1},V^{\prime}%
)+d(u_{2},V^{\prime})+d(u_{1},u_{2})$ is a sum of two odd numbers and
therefore must be even. Since $W(G^{\prime})$ is even, we now conclude that
$W(G)$ must be even.
\end{proof}

The main toool for obtaining our results throughout the paper will be a
transformation of a tree which increases the value of Wiener index by exactly
four. We will call it Transformation A, but let us introduce its formal definition.

\begin{definition}
Let $G$ be a tree and $u\in V(G)$ a vertex of degree $4$ such that neighbors
$v_{1}$ and $v_{2}$ of $u$ are leafs, while neighbors $w_{1}$ and $w_{2}$ of
$u$ are not leafs. We say that a tree $G^{\prime}$ is obtained from $G$ by
Transformation A if $G^{\prime}$ is obtained from $G$ by deleting edges
$uv_{1}$ and $uv_{2},$ while adding edges $w_{1}v_{1}$ and $w_{2}v_{2}.$
\end{definition}

\begin{theorem}
\label{tm_TransformationA}Let $G$ be a tree and let $G^{\prime}$ be a tree
obtained from $G$ by Transformation A. Then $W(G^{\prime})=W(G)+4.$
\end{theorem}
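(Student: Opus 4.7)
The plan is to partition $V(G)$ based on the unique structure around $u$ and then track distance changes pair by pair, exploiting the symmetric nature of Transformation A. Let $A$ (resp.\ $B$) denote the vertex set of the component of $G-uw_{1}$ (resp.\ $G-uw_{2}$) containing $w_{1}$ (resp.\ $w_{2}$). Since $u$ has degree $4$ in $G$ with neighbors $v_{1},v_{2},w_{1},w_{2}$, we obtain the disjoint decomposition $V(G)=\{u,v_{1},v_{2}\}\sqcup A\sqcup B$, and clearly $V(G')=V(G)$. Because Transformation A only modifies edges incident with $u,v_{1},v_{2}$, the subgraphs induced on $A$ and on $B$ are identical in $G$ and $G'$, so distances within $A$ and within $B$ are preserved.

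Next I would check the cross distances. For $x\in A$ and $y\in B$, the unique $xy$-path in either tree has the form $x \leadsto w_{1}\to u\to w_{2}\leadsto y$, giving $d_{G}(x,y)=d_{G'}(x,y)=d_{A}(x,w_{1})+d_{B}(y,w_{2})+2$. Likewise, for $x\in A$ we have $d_{G}(x,u)=d_{G'}(x,u)=d_{A}(x,w_{1})+1$, and symmetrically for $y\in B$. Hence all these pairs contribute $0$ to $W(G')-W(G)$.

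The remaining pairs involve $v_{1}$ or $v_{2}$, and I would tabulate their contributions directly. For $x\in A$, $d_{G}(x,v_{1})=d_{A}(x,w_{1})+2$ while $d_{G'}(x,v_{1})=d_{A}(x,w_{1})+1$, so the change is $-1$; similarly $d_{G'}(x,v_{2})-d_{G}(x,v_{2})=+1$. For $y\in B$ the roles swap: $+1$ for $v_{1}$ and $-1$ for $v_{2}$. Thus the $A$- and $B$-contributions to $W(G')-W(G)$ sum to $(-|A|+|A|)+(|B|-|B|)=0$. The only net change comes from the three pairs among $\{u,v_{1},v_{2}\}$: in $G$ their distances are $1,1,2$, while in $G'$ they become $2,2,4$, contributing $+1+1+2=4$. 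Summing all contributions yields $W(G')-W(G)=4$.

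There is no real obstacle beyond careful bookkeeping; the crux is the observation that the simultaneous relocation of $v_{1}$ to $w_{1}$ and $v_{2}$ to $w_{2}$ produces cancellation between the $A$- and $B$-contributions, leaving only the local change on the triangle-like set $\{u,v_{1},v_{2}\}$.
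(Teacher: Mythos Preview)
Your proof is correct and follows essentially the same approach as the paper: both partition $V(G)$ into the $w_{1}$-side, the $w_{2}$-side, and $\{u,v_{1},v_{2}\}$, observe that only distances involving $v_{1}$ or $v_{2}$ change, and exploit the $\pm 1$ cancellation over the two sides so that the net change of $+4$ comes solely from the three pairs among $\{u,v_{1},v_{2}\}$. You spell out a few unchanged pairs that the paper simply asserts do not change, but the argument is otherwise identical.
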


\begin{proof}
For the simplicity sake we will use notation $d^{\prime}(u,v)$ for
$d_{G^{\prime}}(u,v).$ Let $G_{w_{i}}=(V_{w_{i}},E_{w_{i}})$ be the connected
component of $G\backslash\{u\}$ which contains vertex $w_{i}$ for $i=1,2.$
Note that the only distances that change in Transformation A are distances
from vertices $v_{1}$ and $v_{2}.$ For every $v\in V_{w_{1}}\cup V_{w_{1}}$ we
have
\[
d^{\prime}(v_{1},v)-d(v_{1},v)+d^{\prime}(v_{2},v)-d(v_{2},v)=0.
\]
For the vertex $u$ we have
\[
d^{\prime}(v_{1},u)-d(v_{1},u)+d^{\prime}(v_{2},u)-d(v_{2},u)=2.
\]
Finally, we also have $d^{\prime}(v_{1},v_{2})-d(v_{1},v_{2})=2.$ Therefore,
$W(G^{\prime})-W(G)=4$ which proves the theorem.
\end{proof}

Although Transformation A can be applied on any tree graph, we will mainly
apply it on caterpillar trees. Moreover, it is critical to find a kind of
caterpillar tree on which Transformation A can be applied repeatedly as many
times as possible. For that purpose, let us prove the following theorem.

\begin{theorem}
\label{Tm_CaterpillarWienerComplete}Let $G$ be a caterpillar tree and
$P=u_{1}\ldots u_{d}$ its interior path. If there is a vertex $u_{i}\in P$ of
degree $4$ such that $u_{i\pm j}$ is of degree $3$ for every $j=1,\ldots,k-1,$
than the interval $[W(G),W(G)+4k^{2}]$ is Wiener $4-$complete.
\end{theorem}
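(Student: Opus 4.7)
I would prove the statement by induction on $k$, constructing an explicit sequence of $k^{2}$ successive Transformation A applications. By Theorem \ref{tm_TransformationA} each application raises the Wiener index by exactly $4$, so the resulting trees $G = G_{0}, G_{1}, \ldots, G_{k^{2}} \in \mathcal{T}_{n}$ realize every value $W(G) + 4j$ for $0 \leq j \leq k^{2}$, certifying Wiener $4$-completeness of $[W(G), W(G) + 4k^{2}]$.

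\textbf{Base case $k = 1$.} Vertex $u_{i}$ has degree $4$, with its two interior-path neighbors $u_{i \pm 1}$ non-leaves and its other two neighbors leaves, so a single Transformation A at $u_{i}$ is legal and produces a tree with Wiener index $W(G) + 4$.

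\textbf{Inductive step.} Assume the claim for $k - 1$, and let $a_{j}$ denote the current number of leaves attached to $u_{i + j}$ for $-(k-1) \leq j \leq k-1$; initially $a_{0} = 2$ and $a_{j} = 1$ otherwise. In Stage 1 (outer expansion) I apply Transformation A in the order $u_{i},\ u_{i-1},\ u_{i+1},\ u_{i-2},\ u_{i+2},\ \ldots,\ u_{i-(k-1)},\ u_{i+(k-1)}$, for a total of $2k - 1$ applications. A firing-by-firing bookkeeping of the $a_{j}$'s reveals a wave of ``$a = 2$'' configurations propagating outward alternately to the left and to the right of $u_{i}$, so that immediately before each scheduled firing the target vertex has exactly $a = 2$ (hence degree $4$) while its two interior-path neighbors are non-leaves, making every application legal. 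After Stage 1 the state is $a_{0} = 2$, $a_{\pm j} = 1$ for $1 \leq j \leq k - 2$, and $a_{\pm (k-1)} = 0$.

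In Stage 2 (inner recursion) the resulting tree is again a caterpillar with the same interior path $P$; moreover $u_{i}$ still has degree $4$, and $u_{i \pm j}$ has degree $3$ for every $j = 1, \ldots, k - 2$, so the inductive hypothesis applies to this inner configuration and supplies $(k-1)^{2}$ further legal Transformation A applications, giving the total $(2k - 1) + (k - 1)^{2} = k^{2}$. The main obstacle is the legality check in Stage 1: once one tabulates how the counts $a_{j}$ evolve under the prescribed firing order it reduces to a routine computation, but it is the step that must be presented carefully to ensure no vertex is ever fired with the wrong number of leaves, and to confirm that firing $u_{i \pm (k-1)}$ is valid because its outer path-neighbor, lying on the interior path $P$, is automatically a non-leaf.
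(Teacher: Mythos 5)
Your proposal is correct and takes essentially the same approach as the paper: repeatedly applying Transformation A to degree-$4$ vertices within $\{u_{i\pm j}\colon j=0,\ldots,k-1\}$ until none remain, each application adding $4$ to the Wiener index. The paper's own proof is a one-line assertion that such a process consists of exactly $k^{2}$ transformations; your explicit outward firing schedule together with the induction $(2k-1)+(k-1)^{2}=k^{2}$ supplies the legality bookkeeping and the count that the paper leaves unverified.
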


\begin{proof}
By applying repeatedly Transformation A on exactly one vertex from $\{u_{i\pm
j}:j=0,\ldots,k-1\}$ until there is no more vertex of degree $4$ in that set,
one will make $k^{2}$ transformations and in each transformation the value of
Wiener index will increase by 4.
\end{proof}

Note that the transformation in Theorem \ref{Tm_CaterpillarWienerComplete} can
be applied $k^{2}$ times. If we prove that there are $O(n)$ different values
of $d$ for which $k=O(n),$ we obtain roughly $O(n^{3})$ graphs with different
values of Wiener index which is exactly the result we aim at. So, that is what
we are going to do in following sections, but in order to do that precisely we
will have to construct four different special types of caterpillar trees. To
easily construct those four types of caterpillar trees we first introduce two
basic types of caterpillars from which those four types will be constructed by
adding one or two vertices.

\begin{definition}
Let $n,d$ and $x$ be positive integers such that $n\geq18$ is even,
$\left\lceil \frac{n-2}{4}\right\rceil \leq d\leq\frac{n-8}{2}$ and
$x\leq\frac{4+4d-n}{2}.$ Caterpillar $B_{1}(n,d,x)$ is a caterpillar on even
number of vertices $n$ obtained from path $P=u_{-d}\ldots u_{-1}u_{0}%
u_{1}\ldots u_{d}$ by appending a leaf to vertices $u_{-d-1+x}$ and
$u_{d+1-x}$ and by appending a leaf to $2k-1$ consecutive vertices
$u_{-(k-1)},\ldots,u_{k-1}$ where $k=\frac{n-(2d+1)-1}{2}.$
\end{definition}

\begin{lemma}
\label{lema_WienerBasic1}Let $n,d$ and $x$ be integers such that
$B_{1}(n,d,x)$ is defined. Then
\begin{align*}
W(B_{1}(n,d,x))  &  =\frac{n^{3}}{3}+(-\frac{3d}{2}-\frac{5}{4})n^{2}%
+(4d^{2}+10d+\frac{13}{3}-2x)n+\\
&  +2x^{2}-\frac{8}{3}d^{3}-12d^{2}-\frac{46d}{3}-7.
\end{align*}

\end{lemma}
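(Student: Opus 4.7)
The proof is a direct computation of $W(B_1(n,d,x))$. I would first set $k=(n-2d-2)/2$ and $q=d+1-x$; then the constraint $x\le (4+4d-n)/2$ translates to $q\ge k$, so that the $2k+1$ leaves are attached to the pairwise distinct columns $\{-q,\,-(k-1),\ldots,k-1,\,q\}$. Next I would invoke the edge version of the Wiener index,
\[
W(G)=\sum_{e\in E(G)} n_1(e)\,n_2(e),
\]
and separate the contribution of the $2k+1$ pendant edges (each equal to $n-1$) from that of the $2d$ interior path edges.

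For each interior edge $u_iu_{i+1}$ with $-d\le i\le d-1$, the left component contains $(i+d+1)$ path vertices together with $\ell_L(i)$ leaves, where $\ell_L(i)$ is a piecewise-constant step function with jumps at $i\in\{-q,-(k-1),\ldots,k-1,q-1\}$. Since $q\ge k$, this partitions the range $[-d,d-1]$ into at most five intervals on which $n_L(i)=(i+d+1)+\ell_L(i)$ is affine in $i$. On each interval, $\sum n_L(i)\bigl(n-n_L(i)\bigr)$ evaluates in closed form using $\sum i$ and $\sum i^2$, and the left--right symmetry of $B_1(n,d,x)$ about $u_0$ halves the work.

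Assembling the pendant and interior contributions and then substituting $k=(n-2d-2)/2$, $q=d+1-x$ and collecting by powers of $n$ should yield the claimed polynomial. The main obstacle is algebraic bookkeeping: five regions, three parameters, and fractional coefficients at the end. A slightly cleaner organization I would consider is to assign each column $u_j$ a weight $c_j=1+w_j\in\{1,2\}$ (with $w_j$ the number of leaves at $u_j$) and use the identity
\[
W(G)=\sum_{j_1<j_2}|j_1-j_2|\,c_{j_1}c_{j_2}+2d(2k+1)+(2k+1)^2,
\]
expanded via $c_{j_1}c_{j_2}=1+w_{j_1}+w_{j_2}+w_{j_1}w_{j_2}$. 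This decomposes the principal sum into a pure path Wiener index $\binom{2d+2}{3}$, a linear-in-$w$ term $\sum_j w_j\bigl(j^2+d(d+1)\bigr)$, and a Wiener sum $\binom{2k}{3}+4kq$ over the leaf-position set; each piece has an explicit closed form, and no region split is needed before the final polynomial simplification.
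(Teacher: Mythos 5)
Your decomposition is sound and genuinely different from the paper's. The paper's proof simply enumerates the families of vertex pairs (path--path, central-leaf--central-leaf, the two outer leaves, path--central-leaf, path--outer-leaf, central-leaf--outer-leaf) and sums the distances over each family directly, then ``simplifies''; you instead pass to $W=\sum_e n_1(e)n_2(e)$ and then to the column-weight identity. I checked your ``cleaner'' variant: with $c_j=1+w_j$ the same-column and leaf-correction terms do collapse to $(2k+1)(n-1)=2d(2k+1)+(2k+1)^2$, and the three pieces $\binom{2d+2}{3}$, $\sum_j w_j\bigl(j^2+d(d+1)\bigr)$ and $\binom{2k}{3}+4kq$ are all correct closed forms (one minor slip: in the edge-cut variant the last jump of $\ell_L$ occurs at $i=q$, not $q-1$, though this is moot when $q=d$). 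Your route avoids the region split entirely and is, if anything, cleaner than the paper's, whose displayed sum in fact miscounts the central-leaf-to-outer-leaf distances (it uses $2\lvert i-x'\rvert+2$ where $2\lvert i-x'\rvert+4$ is needed).

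The gap is precisely the step you did not perform. Assembling your pieces does \emph{not} yield the claimed polynomial: substituting $k=(n-2d-2)/2$ and $q=d+1-x$ into
\[
\binom{2d+2}{3}+\sum_j w_j\bigl(j^2+d(d+1)\bigr)+\binom{2k}{3}+4kq+(2k+1)(n-1)
\]
produces $\frac{n^{3}}{4}$ and $\frac{13}{2}n$ where the lemma asserts $\frac{n^{3}}{3}$ and $\frac{13}{3}n$ (all other coefficients agree). The stated formula cannot be correct as printed: for $d$ near its maximum $\frac{n-8}{2}$ it evaluates to roughly $\frac{n^{3}}{4}$, which exceeds $W(P_n)\sim\frac{n^{3}}{6}$, impossible for a tree; and for the concrete instance $B_1(18,4,1)$ --- the comb in which each of the nine path vertices carries one pendant leaf --- the true Wiener index is $633$, the stated formula gives $1080$, and the corrected formula gives $633$. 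So your method is fine, but the assertion that the assembly ``should yield the claimed polynomial'' is false; carrying the algebra through would force a correction of the lemma statement itself, and a proof of the statement as literally written is not attainable by your (or any) computation.
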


\begin{proof}
Note that for $k=\frac{n-(2d+1)-1}{2}$ and $x^{\prime}=-d-1+x$ we have
\begin{align*}
W(B_{1}(n,d,x))  &  =\sum_{i=-d}^{d}\sum_{j=i+1}^{d}(j-i)+\sum_{i=-(k-1)}%
^{k-1}\sum_{j=i+1}^{k-1}(j-i+2)+\\
&  +(2d+2-2(x-1))+\sum_{i=-d}^{d}\sum_{j=-(k-1)}^{k-1}(\left\vert
i-j\right\vert +1)+\\
&  +\sum_{i=-d}^{d}2(\left\vert i-x^{\prime}\right\vert +1)+\sum
_{i=-(k-1)}^{k-1}(2\left\vert i-x^{\prime}\right\vert +2).
\end{align*}
Simplifying this expresion yields the formula from the lemma statement.
\end{proof}

\begin{definition}
Let $n,d$ and $x$ be positive integers such that $n\geq18$ is even, $4\leq
d\leq\left\lfloor \frac{n}{4}\right\rfloor $ and $x\leq\frac{n-4d+2}{2}.$
Caterpillar $B_{2}(n,d,x)$ is a caterpillar on even number of vertices $n$
obtained from path $P=u_{-d}\ldots u_{-1}u_{0}u_{1}\ldots u_{d}$ by appending
a leaf to $2k-1$ consecutive vertices $u_{-(k-1)},\ldots,u_{k-1}$ where
$k=d-1,$ by appending $x$ leafs to each of the $u_{-(d-1)}$ and $u_{(d-1)},$
and by appending $r$ leafs to each of the $u_{-d}$ and $u_{d}$ where
$r=\frac{n-4d-2x+2}{2}.$
\end{definition}

\begin{lemma}
Let $n,d$ and $x$ be integers such that $B_{2}(n,d,x)$ is defined. Then
\[
W(B_{2}(n,d,x))=(\frac{d}{2}+1)n^{2}+(-2d-2)n-\frac{8d^{3}}{3}+\frac{32d}%
{3}-5+8x-8dx-2x^{2}.
\]

\end{lemma}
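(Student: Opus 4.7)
The plan is to mimic the decomposition used in the proof of Lemma \ref{lema_WienerBasic1}. The vertices of $B_{2}(n,d,x)$ split into four natural classes: the $2d+1$ path vertices $u_{-d},\ldots,u_{d}$; the $2k-1=2d-3$ middle leaves attached one each to $u_{-(d-2)},\ldots,u_{d-2}$; the two groups of $x$ leaves at $u_{\pm(d-1)}$; and the two groups of $r$ leaves at $u_{\pm d}$, where $r=(n-4d-2x+2)/2$. Since $B_{2}(n,d,x)$ is a caterpillar, the distance between any two vertices equals the distance between their anchors on $P$, plus $0$, $1$, or $2$ according to how many endpoints are off-path leaves at distinct anchors; two leaves sharing an anchor are at distance exactly $2$.

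I would then write $W(B_{2}(n,d,x))$ as the sum of pair contributions grouped by class: (i) the path--path sum $\sum_{-d\le i<j\le d}(j-i)$; (ii) the middle-leaf--middle-leaf sum $\sum_{-(d-2)\le i<j\le d-2}(j-i+2)$; (iii) the intra-bundle contributions $x(x-1)$ from each of the two $x$-bundles and $r(r-1)$ from each of the two $r$-bundles; (iv) the inter-bundle contributions $2d\cdot x^{2}$ between the two $x$-bundles, $(2d+2)r^{2}$ between the two $r$-bundles, and the four cross terms between the $x$-bundles and the $r$-bundles (two of distance $3$ and two of distance $2d+1$, each with multiplicity $xr$); (v) the path--leaf cross-contributions $\sum_{i=-d}^{d}(|i-p|+1)$ for $p\in\{-d,-(d-1),d-1,d\}$, weighted by the multiplicities $r$, $x$, $x$, $r$; (vi) the middle-leaf--to--$x$-bundle and middle-leaf--to--$r$-bundle cross-contributions, each a sum of the form $\sum_{i=-(d-2)}^{d-2}(|i-p|+2)$ with multiplicity $x$ or $r$ respectively.

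Each piece reduces via the standard identities for $\sum i$, $\sum i^{2}$, and $\sum|i-p|$ to a polynomial in $n$, $d$, $x$, and $r$. Substituting $r=(n-4d-2x+2)/2$ and collecting yields the claimed closed form. No new idea is required beyond the decomposition principle already used in Lemma \ref{lema_WienerBasic1}; the sole obstacle is disciplined bookkeeping, since a single missed factor of two in any inter-bundle or cross-bundle block would manifest as a spurious $nx$, $n^{2}$, or $x^{2}$ term not appearing in the stated formula. Once every block is accounted for, direct simplification gives the identity.
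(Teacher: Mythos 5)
Your decomposition is the same one the paper uses: a block-by-block summation over the four vertex classes of the caterpillar, with each pairwise distance equal to the anchor distance on $P$ plus $0$, $1$, or $2$. Every block you do list is set up correctly, including the inter-bundle terms $2d\cdot x^{2}$ and $(2d+2)r^{2}$ and the four $x$--$r$ cross terms, which match the paper's $4\tbinom{x}{2}+x^{2}(2d)+4\tbinom{r}{2}+r^{2}(2d+2)+2(3xr+xr(2d+1))$. However, your enumeration of the cross-class blocks is incomplete. With four classes there are six cross-class blocks; you account for path--$x$-bundle, path--$r$-bundle, middle-leaf--$x$-bundle, middle-leaf--$r$-bundle, and $x$-bundle--$r$-bundle, but the path--middle-leaf block
\[
\sum_{i=-d}^{d}\ \sum_{j=-(d-2)}^{d-2}\bigl(\left\vert i-j\right\vert +1\bigr)
\]
is missing: in item (v) you restrict the anchors to $p\in\{-d,-(d-1),d-1,d\}$, so the $2d-3$ middle leaves at $u_{-(d-2)},\ldots,u_{d-2}$ never get paired with the $2d+1$ path vertices. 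This is exactly the term $\sum_{i=-d}^{d}\sum_{j=-(k-1)}^{k-1}(\left\vert i-j\right\vert +1)$ in the paper's proof. It is $\Theta(d^{3})$ and contributes materially to the coefficients $-\frac{8d^{3}}{3}+\frac{32d}{3}-5$ of the stated formula, so the final ``collecting'' step cannot produce the claimed identity without it. The fix is immediate --- add this block and simplify --- but as written the computation would not close.
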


\begin{proof}
Let $k=d-1$ and $r=\frac{n-4d-2x+2}{2}.$ Note that%
\begin{align*}
W(B_{2}(n,d,x))  &  =\sum_{i=-d}^{d}\sum_{j=i+1}^{d}(j-i)+\sum_{i=-(k-1)}%
^{k-1}\sum_{j=i+1}^{k-1}(j-i+2)+4\tbinom{x}{2}+x^{2}(2d)+\\
&  +4\tbinom{r}{2}+r^{2}(2d+2)+\sum_{i=-d}^{d}\sum_{j=-(k-1)}^{k-1}(\left\vert
i-j\right\vert +1)+\\
&  +2x(3+\sum_{i=3}^{2d+1}(i-1))+2r\sum_{i=1}^{2d+1}i+2x\sum_{i=-(k-1)}%
^{k-1}(i+d+1)\\
&  +2r\sum_{i=-(k-1)}^{k-1}(i+d+2)+2(3xr+xr(2d+1)).
\end{align*}
Simplifying this expresion yields the desired formula.
\end{proof}

Finally, let us denote $d_{1}^{\min}=\left\lceil \frac{n-2}{4}\right\rceil $
and $x_{1}^{\max}=\frac{4+4d_{1}^{\min}-n}{2},$ while $d_{2}^{\max
}=\left\lfloor \frac{n}{4}\right\rfloor .$ Note that
\begin{equation}
B_{1}(n,d_{1}^{\min},x_{1}^{\max})=B_{2}(n,d_{2}^{\max},1).
\label{For_Lijepljenje}%
\end{equation}

\section{Even number of vertices}

\begin{definition}
Let $n,d$ and $x$ be integers for which $B_{1}(n-2,d,x)$ is defined. For
$s=-1,0,1,2$ caterpillar $G_{1}(n,d,x,s)$ is a caterpillar on even number of
vertices $n,$ obtained from $B_{1}(n-2,d,x)$ by appending a leaf to vertex
$u_{s}$ and a leaf to vertex $u_{d}$ of path $P=u_{-d}\ldots u_{-1}u_{0}%
u_{1}\ldots u_{d}$ in $B_{1}(n-2,d,x).$
\end{definition}

\begin{lemma}
\label{Lema_WienerGr1}Let $n,d$, $x$ and $s$ be integers for which
$G_{1}(n,d,x,s)$ is defined. Then%
\[
W(G_{1}(n,d,x,s))=W(B_{1}(n-2,d,x))+\frac{n^{2}}{4}+\frac{3n}{2}%
+2d^{2}+3d+2s^{2}-s-2x.
\]

\end{lemma}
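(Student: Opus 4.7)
The plan is to regard $G_{1}(n,d,x,s)$ as the caterpillar $B_{1}(n-2,d,x)$ with two extra leaves appended, call them $\ell_{s}$ (attached to $u_{s}$) and $\ell_{d}$ (attached to $u_{d}$). Writing $V'=V(B_{1}(n-2,d,x))$ and using the standard fact that attaching a leaf to $v$ contributes $d(v,u)+1$ to the distance from that leaf to every $u\in V'$, one obtains
\[
W(G_{1})=W(B_{1}(n-2,d,x))+d(u_{s},V')+d(u_{d},V')+2(n-2)+d(u_{s},u_{d})+2,
\]
and since $s\leq 2\leq d$ we have $d(u_{s},u_{d})=d-s$. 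The lemma thus reduces to evaluating the two single-vertex sums $d(u_{s},V')$ and $d(u_{d},V')$ inside the caterpillar.

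I would handle both sums by partitioning $V'$ into three natural blocks: the $2d+1$ interior-path vertices $u_{-d},\ldots,u_{d}$; the two endpoint leaves at $u_{-d-1+x}$ and $u_{d+1-x}$; and the $2k-1$ middle leaves at $u_{-(k-1)},\ldots,u_{k-1}$, where $k=(n-2d-4)/2$. At $u_{d}$ every absolute value in the resulting sums is one-signed, and the three blocks contribute $d(2d+1)$, $(2d+2-x)+x$, and $(2k-1)(d+1)$ respectively (the $\sum j$ piece vanishes by symmetry), giving $d(u_{d},V')=2d^{2}+3d+2+(2k-1)(d+1)$. For $u_{s}$ with $s\in\{-1,0,1,2\}$ the elementary identity $\sum_{j=-a}^{a}|s-j|=a^{2}+s^{2}+a$, valid whenever $|s|\leq a$, is applied with $a=d$ and with $a=k-1$; together with the endpoint-leaf contribution $(s+d+2-x)+(d+2-x-s)=2d+4-2x$ this yields $d(u_{s},V')=d^{2}+3d+3+2s^{2}-2x+k^{2}+k$. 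Substituting $k=(n-2d-4)/2$ and assembling everything, the $nd$ piece from $(2k-1)(d+1)$ cancels the $-nd$ piece from $k^{2}$, and the remainder should collapse to exactly $\frac{n^{2}}{4}+\frac{3n}{2}+2d^{2}+3d+2s^{2}-s-2x$.

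The only step that is not pure algebra is verifying that the sign conventions used inside the absolute values are actually valid under the hypotheses. From the definition of $B_{1}(n-2,d,x)$ one has $\lceil(n-4)/4\rceil\leq d\leq(n-10)/2$ and $x\leq(6+4d-n)/2$. The upper bound on $d$ forces $k\geq 3$, so $k-1\geq 2\geq|s|$, placing $u_{s}$ strictly inside the middle-leaf block and satisfying the range hypothesis of the identity above. The same bound yields $x\leq(6+4d-n)/2\leq d-1$ (using $d\leq(n-8)/2$), which implies $-d-1+x\leq -2\leq s$ and $d+1-x\geq 2\geq s$, so both endpoint-leaf absolute values collapse exactly as claimed. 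This sign check is the only place the argument can really go wrong; once it is in hand the remainder is mechanical simplification.
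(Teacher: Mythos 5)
Your proposal is correct and takes essentially the same route as the paper: the paper's auxiliary function $f(v)$ is precisely your $d(u_{v},V')+(n-2)$, and its identity $W(G_{1})=W(B_{1}(n-2,d,x))+f(s)+f(d)+d-s+2$ is exactly your decomposition into the two leaf distance-sums plus the mutual distance $d-s+2$. Your explicit block-by-block evaluation and the sign checks (which I verified, and which do close to the stated formula) simply carry out in detail what the paper compresses into ``plugging in and simplifying.''
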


\begin{proof}
Let $k=\frac{(n-2)-(2d+1)-1}{2}$, $x^{\prime}=-d-1+x.$ We define function%
\begin{align*}
f(v)  &  =\sum_{i=-d}^{d}(\left\vert v-i\right\vert +1)+\sum_{i=-(k-1)}%
^{k-1}(\left\vert v-i\right\vert +2)+\\
&  (\left\vert x^{\prime}-v\right\vert +2+\left\vert -x^{\prime}-v\right\vert
+2)
\end{align*}
Now, the definition of $G_{1}(n,d,x,s)$ implies%
\[
W(G_{1}(n,d,x,s))=W(B_{1}(n-2,d,x))+f(s)+f(d)+d-s+2.
\]
Plugging $s$ and $d$ into the formula for $f$ and simplifying the obtained
formula yields the result.
\end{proof}

As a direct consequence of Lemma \ref{Lema_WienerGr1} we obtain the following corollary.

\begin{corollary}
\label{Kor_Gr1step}It holds that%
\begin{align*}
W(G_{1}(n,d,x,1))  &  =W(G_{1}(n,d,x,0))+1,\\
W(G_{1}(n,d,x,2))  &  =W(G_{1}(n,d,x,0))+6,\\
W(G_{1}(n,d,x,-1))  &  =W(G_{1}(n,d,x,0))+3.
\end{align*}

\end{corollary}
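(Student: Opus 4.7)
The plan is essentially a one-line substitution, since Lemma~\ref{Lema_WienerGr1} has already isolated the $s$-dependence of $W(G_1(n,d,x,s))$. Reading the formula from that lemma,
$$
W(G_1(n,d,x,s)) = \Bigl[W(B_1(n-2,d,x)) + \tfrac{n^2}{4} + \tfrac{3n}{2} + 2d^2 + 3d - 2x\Bigr] + (2s^2 - s),
$$
so setting $C(n,d,x)$ equal to the bracketed quantity (which does not involve $s$) and $h(s) := 2s^2 - s$, I obtain $W(G_1(n,d,x,s)) = C(n,d,x) + h(s)$. Hence for any admissible pair of parameter values $s, s'$,
$$
W(G_1(n,d,x,s)) - W(G_1(n,d,x,s')) = h(s) - h(s'),
$$
and the three identities in the corollary reduce to evaluating $h$ on the set $\{-1,0,1,2\}$.

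The remaining step is the arithmetic $h(0)=0$, $h(1)=1$, $h(2)=6$, $h(-1)=3$, from which $h(1)-h(0)=1$, $h(2)-h(0)=6$, and $h(-1)-h(0)=3$ are immediate. These are precisely the three equalities asserted, so I would simply substitute and conclude.

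There is no real obstacle, which is why the authors label the result a direct consequence; the heavy computation was already absorbed into Lemma~\ref{Lema_WienerGr1}. The only thing worth flagging in passing is that the four values $s \in \{-1,0,1,2\}$ are simultaneously admissible because the definition of $G_1(n,d,x,s)$ places conditions on $n$, $d$, and $x$ alone (via the requirement that $B_1(n-2,d,x)$ be defined) and enumerates $s$ over exactly this four-element set, so no side conditions need to be verified when passing between different choices of $s$.
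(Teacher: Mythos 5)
Your proof is correct and matches the paper's intent exactly: the paper states the corollary as a direct consequence of Lemma~\ref{Lema_WienerGr1} with no further argument, and the only content is isolating the $s$-dependence $2s^{2}-s$ and evaluating it at $s=-1,0,1,2$, which you do. Nothing further is needed.
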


The main tool in proving the results will be Transformation A of the graph,
which, for a given graph, finds another graph whose value of Wiener index is
greater by $4.$ Therefore, it is critical to find a graph on which
Transformation A can be applied consecutively as many times as possible. That
was the basic idea behind constructing graph $G_{1}(n,d,x,s)$ as we did, so
that we can use Theorem \ref{Tm_CaterpillarWienerComplete} in filling the
interval between values $W(G_{1}(n,d,x,s))$ for cnsecutive values of $x$ and
$d$. So, let us first apply Theorem \ref{Tm_CaterpillarWienerComplete} (i.e.
find the corresponding value of $k$) on the graph $G_{1}(n,d,x,s).$

\begin{lemma}
\label{Lema_Gr1WienerKcomplete}Let $n,d$, $x$ and $s$ be integers for which
$G_{1}(n,d,x,s)$ is defined. For $k=\frac{1}{2}n-d-4$ the interval
$[W(G_{1}(n,d,x,s)),W(G_{1}(n,d,x,s))+4k^{2}]$ is Wiener $4-$complete.
\end{lemma}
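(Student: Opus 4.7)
The plan is to apply Theorem~\ref{Tm_CaterpillarWienerComplete} with $u_s$ serving as the degree-$4$ vertex on the interior path of $G_1(n,d,x,s)$.

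I would first read off the degrees along the path $u_{-d}\ldots u_d$. Let $k_B=\frac{(n-2)-(2d+1)-1}{2}=\frac{n-2d-4}{2}$ denote the parameter appearing in the definition of $B_1(n-2,d,x)$. In $B_1(n-2,d,x)$, each vertex $u_i$ with $-(k_B-1)\leq i\leq k_B-1$ gets exactly one appended leaf and is therefore of degree $3$. The remaining two leaves of $B_1$ sit on $u_{-d-1+x}$ and $u_{d+1-x}$, and the bound $x\leq \frac{4+4d-n}{2}$ gives $-d-1+x\leq\frac{2d+2-n}{2}<\frac{2d+6-n}{2}=-(k_B-1)$, with a symmetric inequality on the right, so these two leaves lie strictly outside the middle range. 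Passing from $B_1(n-2,d,x)$ to $G_1(n,d,x,s)$ adds one leaf at $u_s$ and one at $u_d$. Since the constraint $d\leq\frac{n-10}{2}$ on $B_1(n-2,d,x)$ forces $k_B\geq 3$, each index $s\in\{-1,0,1,2\}$ lies in $\{-(k_B-1),\ldots,k_B-1\}$, so $u_s$ ends up with two appended leaves (one from $B_1$, one from $G_1$) and degree $4$, while every other $u_i$ in this middle range still has exactly one leaf and degree $3$.

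Next I would count the consecutive degree-$3$ vertices on either side of $u_s$. To the left of $u_s$ they form the chain $u_{s-1},\ldots,u_{-(k_B-1)}$ of length $s+k_B-1$, and to the right the chain $u_{s+1},\ldots,u_{k_B-1}$ of length $k_B-1-s$. The minimum of these two lengths, as $s$ ranges over $\{-1,0,1,2\}$, equals $k_B-3$ (attained at $s=2$). Hence with $k=k_B-2=\frac{n}{2}-d-4$ we get $u_{s\pm j}$ of degree $3$ for every $j=1,\ldots,k-1$, and Theorem~\ref{Tm_CaterpillarWienerComplete} immediately gives that $[W(G_1(n,d,x,s)),W(G_1(n,d,x,s))+4k^2]$ is Wiener $4$-complete.

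The only real difficulty I anticipate is pure bookkeeping: tracking which path vertices carry how many appended leaves, and making sure the two outer leaves of $B_1$ and the extra leaf of $G_1$ at $u_d$ do not intrude into the middle range. Both facts follow from one-line applications of the defining inequalities on $d$ and $x$, so I expect the formal write-up to be short.
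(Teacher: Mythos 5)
Your proposal is correct and follows essentially the same route as the paper: identify the $2k_B-1$ consecutive degree-$3$ vertices coming from $B_1(n-2,d,x)$, note that the extra leaf at $u_s$ makes it degree $4$, and since $s\leq 2$ the shorter side still has $k_B-3$ degree-$3$ vertices, so Theorem~\ref{Tm_CaterpillarWienerComplete} applies with $k=k_B-2=\frac{n}{2}-d-4$. (The only quibble is a harmless constant slip: for $B_1(n-2,d,x)$ the bound is $x\leq\frac{4+4d-(n-2)}{2}$, which still places the two outer leaves, and likewise $u_d$, strictly outside the middle range.)
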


\begin{proof}
Let us denote $k_{1}=\frac{(n-2)-(2d+1)-1}{2}.$ Note that $k_{1}$ is the half
of the number of leafs appended to the vertices $u_{\pm j}$ of the interior
path of $G_{1}(n,d,x,s)$ for $j=0,\ldots,k-1.$ Since $s\leq2,$ note that the
definition of $G_{1}(n,d,x,s)$ and Theorem \ref{Tm_CaterpillarWienerComplete}
imply the result for $k=k_{1}-2.$
\end{proof}

So, let us now establish for which values of $d$ the gap between
$W(G_{1}(n,d,x,s))$ and $W(G_{1}(n,d,x-1,s))$ is smaller than $4k^{2}$ which
is a width of interval which can be filled by repeatedly applying
Transformation A on $G_{1}(n,d,x,s)$ (i.e. by using Lemma
\ref{Lema_Gr1WienerKcomplete}).

\begin{lemma}
\label{Lema_WienerGr1stepX}Let $n,d$, $x\geq2$ and $s$ be integers for which
$G_{1}(n,d,x,s)$ is defined. For $d\leq\frac{1}{2}(n-\sqrt{2n-8}-8)$ the
interval%
\[
\lbrack W(G_{1}(n,d,x,s)),W(G_{1}(n,d,x-1,s))]
\]
is Wiener $4-$complete.
\end{lemma}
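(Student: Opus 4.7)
The plan is to combine Lemma \ref{Lema_Gr1WienerKcomplete} with an explicit computation of the gap $\Delta := W(G_{1}(n,d,x-1,s))-W(G_{1}(n,d,x,s))$. Lemma \ref{Lema_Gr1WienerKcomplete} produces a Wiener $4$-complete interval of width $4k^{2}$ starting at $W(G_{1}(n,d,x,s))$ with $k=\tfrac{1}{2}n-d-4$, so the statement will follow once I verify (i) $\Delta$ is a nonnegative multiple of $4$ and (ii) $\Delta \leq 4k^{2}$ under the stated bound on $d$.

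First I would compute $\Delta$ by isolating the $x$-dependent terms. In Lemma \ref{lema_WienerBasic1} the only $x$-dependence is through $-2xn+2x^{2}$, so applied to $B_{1}(n-2,d,x)$ this contributes $4x-2(n-2)-2 = 4x-2n+2$ when passing from $x-1$ to $x$ in the reverse direction; in Lemma \ref{Lema_WienerGr1} the extra term $-2x$ contributes another $+2$. Collecting signs, I expect
\[
\Delta = W(G_{1}(n,d,x-1,s))-W(G_{1}(n,d,x,s)) = 2n-4x.
\]
Because $n$ is even (this is built into the definition of $G_{1}$), $2n$ is divisible by $4$ and hence $\Delta$ is a multiple of $4$. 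Since the admissible range of $x$ forces $x \leq n/2$ comfortably (the defining inequality for $B_{1}(n-2,d,x)$ gives roughly $x \leq 2d-(n-6)/2 \leq n/2-7$ once $d \leq (n-10)/2$), $\Delta$ is also positive.

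Second I would compare $\Delta$ with the gain $4k^{2}$ from Lemma \ref{Lema_Gr1WienerKcomplete}. Using $x\geq 2$ gives the uniform bound $\Delta \leq 2n-8$. With $k=\tfrac{1}{2}n-d-4$, we have $4k^{2}=(n-2d-8)^{2}$, so it suffices to show
\[
(n-2d-8)^{2}\geq 2n-8.
\]
Provided $n-2d-8\geq 0$ (which will follow once we take the square root in the regime of interest), this is equivalent to $n-2d-8\geq\sqrt{2n-8}$, i.e. $d\leq\tfrac{1}{2}(n-\sqrt{2n-8}-8)$, exactly the hypothesis of the lemma.

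Finally, combining these two steps: Lemma \ref{Lema_Gr1WienerKcomplete} realises every value in $\{W(G_{1}(n,d,x,s))+4i : 0\leq i\leq k^{2}\}$ as a Wiener index of some tree in $\mathcal{T}_{n}$, and by steps (i)–(ii) the target value $W(G_{1}(n,d,x-1,s))$ lies in this arithmetic progression and within the interval, giving the claimed $4$-completeness. The only potentially delicate point is the bookkeeping in the $x$-difference — in particular remembering that $B_{1}$ is evaluated at $n-2$ and that the extra $-2x$ in Lemma \ref{Lema_WienerGr1} must be accounted for — but once the $x$-dependent cubic is isolated the remainder of the argument is a direct algebraic comparison with the quadratic lower bound $4k^{2}$.
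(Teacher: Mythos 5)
Your proposal is correct and follows essentially the same route as the paper: both invoke Lemma \ref{Lema_Gr1WienerKcomplete} with $k=\frac{1}{2}n-d-4$ and reduce the claim to the inequality $4k^{2}\geq 2n-8$, which rearranges to the stated bound on $d$. The only cosmetic difference is that you compute the gap exactly as $2n-4x$ (which the paper records just after the lemma) while the paper's proof merely bounds it by the $x=2$ versus $x=1$ case; the resulting bound $2(n-5)+2=2n-8$ is identical.
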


\begin{proof}
First note that
\begin{align*}
W(G_{1}(n,d,x-1,s))-W(G_{1}(n,d,x,s)) &  \leq W(G_{1}(n,d,2,s))-W(G_{1}%
(n,d,1,s))=\\
&  =2(n-5)+2.
\end{align*}
Therefore, Lemma \ref{Lema_Gr1WienerKcomplete} implies it is sufficient to
find integers $n$ and $d$ for which it holds that $4k^{2}\geq2(n-5)+2$ where
$k=\frac{1}{2}n-d-4.$ By simple calculation it is easy to establish that the
inequality holds for $d\leq\frac{1}{2}(n-\sqrt{2n-8}-8)$ so the lema is proved.
\end{proof}

It is easy to show, using Lemma \ref{Lema_WienerGr1}, that $W(G_{1}%
(n,d,x-1,s))-W(G_{1}(n,d,x,s))=2n-4x$ which is divisible by $4$ since $n$ is
even. Therefore, Lemma \ref{Lema_WienerGr1stepX} enables us to "glue" together
Wiener $4-$complete intervals
\[
\lbrack W(G_{1}(n,d,x,s)),W(G_{1}(n,d,x-1,s))]
\]
into one bigger Wiener $4-$complete interval
\[
\lbrack W(G_{1}(n,d,x_{1}^{\max},s)),W(G_{1}(n,d,1,s))]
\]
where $x_{1}^{\max}=\frac{4+4d-(n-2)}{2}.$ Corrolary \ref{Kor_Gr1step} then
implies that roughly the same interval will be Wiener complete when we take
values for every $s=-1,0,1,2$. We say "roughly" because the difference
$W(G_{1}(n,d,x,2))=W(G_{1}(n,d,x,0))+6$ makes one point gap at $W(G_{1}%
(n,d,x_{1}^{\max},0))+2.$ We now want to "glue" together such bigger intervals
into one interval on the border between $d$ and $d-1.$ The problem is that
\[
G_{1}(n,d,x_{1}^{\max},s)\not =G_{1}(n,d-1,1,s),
\]
so we have to cover the gap in between. Moreover, it holds that
\[
W(G_{1}(n,d,x_{1}^{\max},s))-W(G_{1}(n,d-1,1,s))=n-3
\]
which is not divisible by 4. Therefore, we have to find enough graphs whose
values of Wiener index will cover the gap of $n-3$ plus the gap of $6$ which
arises from "rough" edge of the interval for a given $d.$

\begin{lemma}
\label{Lema_WienerGr1stepD}Let $n,d$, $x_{1}^{\max}=\frac{4+4d-(n-2)}{2}$ and
$s$ be integers for which $G_{1}(n,d,x_{1}^{\max},s)$ and $G_{1}(n,d-1,1,s)$
are defined. For $d\leq\frac{1}{2}(n-\sqrt{n+3}-6)$ the interval
\[
\lbrack G_{1}(n,d-1,1,s),G_{1}(n,d,x_{1}^{\max},s)+6]
\]
is Wiener $4-$complete.
\end{lemma}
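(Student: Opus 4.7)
My plan is to realise the desired interval as a subinterval of a single Wiener $4-$complete interval produced by applying Lemma \ref{Lema_Gr1WienerKcomplete} to the left-endpoint caterpillar $G_1(n,d-1,1,s)$. With parameters $(n,d-1,1,s)$ that lemma yields that
\[
\bigl[W(G_1(n,d-1,1,s)),\ W(G_1(n,d-1,1,s))+4k^2\bigr]
\]
is Wiener $4-$complete for $k=\tfrac{1}{2}n-(d-1)-4=\tfrac{1}{2}n-d-3$. It therefore suffices to show that $4k^2\geq W(G_1(n,d,x_1^{\max},s))+6-W(G_1(n,d-1,1,s))$, because Wiener $4-$completeness of $[a,b]$ is inherited by any $[a,b']$ with $b'\leq b$: every value $a+4i\leq b'$ automatically satisfies $a+4i\leq b$ and is thus realised by some tree in $\mathcal{T}_n$.

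Next I would substitute the identity $W(G_1(n,d,x_1^{\max},s))-W(G_1(n,d-1,1,s))=n-3$ recorded in the paragraph preceding the lemma. This identity comes from Lemma \ref{Lema_WienerGr1} by direct computation: the $s$-dependent terms cancel, and the remaining difference collapses to $n-3$ once one checks, using Lemma \ref{lema_WienerBasic1}, that $W(B_1(n-2,d,x_1^{\max}))=W(B_1(n-2,d-1,1))$ for $x_1^{\max}=2d+3-n/2$. The inequality to prove then reduces to $4k^2\geq n+3$; substituting $k=\tfrac{1}{2}n-d-3$ turns this into $(n-2d-6)^2\geq n+3$, which rearranges to $d\leq\tfrac{1}{2}(n-\sqrt{n+3}-6)$, exactly the hypothesis of the lemma.

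The main obstacle is really just the bookkeeping for the gap identity: verifying that the $B_1-$difference $W(B_1(n-2,d,x_1^{\max}))-W(B_1(n-2,d-1,1))$ vanishes requires combining the explicit cubic formula of Lemma \ref{lema_WienerBasic1} at the specific value $x=x_1^{\max}=2d+3-n/2$, a calculation that I expect to collapse neatly but will need to be carried out term by term. A small subtlety worth flagging is that, since $n$ is even, $n+3$ is odd, so the right endpoint $W(G_1(n,d,x_1^{\max},s))+6$ itself is not of the form $W(G_1(n,d-1,1,s))+4i$; this is harmless because Wiener $4-$completeness only demands hitting the values $a,a+4,\ldots,a+4\lfloor(b-a)/4\rfloor$, the last of which does lie inside the interval.
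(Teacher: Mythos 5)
Your proposal matches the paper's proof: both apply Lemma \ref{Lema_Gr1WienerKcomplete} to $G_{1}(n,d-1,1,s)$ with $k=\tfrac{1}{2}n-(d-1)-4$, reduce the claim to $4k^{2}\geq n+3$ using the gap identity $W(G_{1}(n,d,x_{1}^{\max},s))-W(G_{1}(n,d-1,1,s))=n-3$, and solve for $d$. You additionally justify the gap identity via Lemma \ref{Lema_WienerGr1} and the coincidence $B_{1}(n-2,d,x_{1}^{\max})=B_{1}(n-2,d-1,1)$, and you state the inequality with the correct orientation ($4k^{2}\geq n+3$, giving $d\leq\tfrac{1}{2}(n-\sqrt{n+3}-6)$), where the paper's printed proof has the directions reversed by an evident typo.
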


\begin{proof}
Since
\[
W(G_{1}(n,d,x_{1}^{\max},s)+6-W(G_{1}(n,d-1,1,s))=n-3+6=n+3,
\]
Lemma \ref{Lema_Gr1WienerKcomplete} implies that it is sufficient to find for
which $d$ it holds that $4k^{2}\leq n+3$ where $k=\frac{1}{2}n-(d-1)-4.$ By
simple calculation one obtains that inequality holds for $d\geq\frac{1}%
{2}(n-\sqrt{n+3}-6)$ which proves the theorem.
\end{proof}

Note that the restriction on the maximum value of $d$ is stricter in Lemma
\ref{Lema_WienerGr1stepX} then in Lemma \ref{Lema_WienerGr1stepD} for every
$n>4.$

\bigskip

Now we have taken out all we could from graph $G_{1},$ but that covers only
caterpillars with relativly large $d.$ We can further expand Wiener complete
interval to the left side, i.e. to the caterpillars with smaller $d,$ using
graph $G_{2}$ which we will construct from the basic graph $B_{2}.$

\begin{definition}
Let $n,d$ and $x$ be integers for which $B_{2}(n-2,d,x)$ is defined. For
$s=-1,0,1,2$ caterpillar $G_{2}(n,d,x,s)$ is a caterpillar on even number of
vertices $n,$ obtained from $B_{2}(n-2,d,x)$ by appending a leaf to vertex
$u_{s}$ and a leaf to vertex $u_{d}$ of path $P=u_{-d}\ldots u_{-1}u_{0}%
u_{1}\ldots u_{d}$ in $B_{2}(n-2,d,x).$
\end{definition}

\begin{lemma}
\label{Lema_WienerGr2}Let $n,d$, $x$ and $s$ be integers for which
$G_{2}(n,d,x,s)$ is defined. Then%
\[
W(G_{2}(n,d,x,s))=W(B_{2}(n-2,d,x))+(2d+4)n-2d^{2}-7d-6-2x+2s^{2}-s.
\]

\end{lemma}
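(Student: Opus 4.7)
The plan is to mirror the proof of Lemma \ref{Lema_WienerGr1}, decomposing $W(G_2(n,d,x,s))$ as the Wiener index of the underlying $B_2(n-2,d,x)$ plus contributions from the two newly attached leaves. Writing $\ell_v$ for a new leaf hanging from the interior-path vertex $u_v$, one has
\[
W(G_2(n,d,x,s)) = W(B_2(n-2,d,x)) + f(s) + f(d) + (d-s+2),
\]
where $f(v) = \sum_{w\in V(B_2(n-2,d,x))} d(\ell_v, w)$ and the last summand is the distance $d(\ell_s,\ell_d) = (d-s)+2$ between the two new leaves.

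Partitioning $V(B_2(n-2,d,x))$ into its six natural pieces --- the $2d+1$ path vertices $u_i$ for $-d \le i \le d$, the $2d-3$ middle pendant leaves attached one each to $u_i$ for $-(d-2) \le i \le d-2$, the $x$ pendant leaves at each of $u_{\pm(d-1)}$, and the $r = \tfrac{n-4d-2x}{2}$ pendant leaves at each of $u_{\pm d}$ --- yields
\begin{align*}
f(v) = & \sum_{i=-d}^{d}(|v-i|+1) + \sum_{i=-(d-2)}^{d-2}(|v-i|+2) \\
       & + x\bigl(|v-(d-1)|+|v+(d-1)|+4\bigr) + r\bigl(|v-d|+|v+d|+4\bigr).
\end{align*}

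The next step is to evaluate $f(s)$ and $f(d)$ explicitly. For $s \in \{-1,0,1,2\}$ and $d \ge 4$ the absolute values simplify: $|s-(d-j)| + |s+(d-j)| = 2(d-j)$ for $j = 0,1,2$, while splitting the sum at $i = s$ gives the identity $\sum_{i=-d}^{d}|s-i| = s^2 + d^2 + d$, and analogously $\sum_{i=-(d-2)}^{d-2}|s-i| = s^2 + (d-2)^2 + (d-2)$. For $v = d$ every absolute value equals $d-i$ or $d+i$ outright, so $f(d)$ is immediate.

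Adding $f(s) + f(d) + (d-s+2)$ and then substituting $r = \tfrac{n-4d-2x}{2}$ should collapse the result to $(2d+4)n - 2d^2 - 7d - 6 - 2x + 2s^2 - s$, as claimed. The only genuine obstacle is careful arithmetic bookkeeping across the six vertex classes and the substitution for $r$; conceptually the argument simply parallels Lemma \ref{Lema_WienerGr1}.
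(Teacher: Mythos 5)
Your proposal is correct and follows essentially the same route as the paper: the same decomposition $W(G_2(n,d,x,s))=W(B_2(n-2,d,x))+f(s)+f(d)+(d-s+2)$ with the same distance-sum function $f$ over the path vertices, the middle pendant leaves, and the $x$- and $r$-bundles at $u_{\pm(d-1)}$ and $u_{\pm d}$, followed by substitution of $r=\tfrac{n-4d-2x}{2}$ and simplification. The only difference is that you spell out the elementary sum identities (e.g.\ $\sum_{i=-d}^{d}|s-i|=s^{2}+d^{2}+d$) that the paper leaves implicit, and indeed the arithmetic collapses to the stated formula.
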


\begin{proof}
Let $k=d-1$ and $r=\frac{n-4d-2x}{2}$. We define function%
\begin{align*}
f(v) &  =\sum_{i=-d}^{d}(\left\vert v-i\right\vert +1)+\sum_{i=-(k-1)}%
^{k-1}(\left\vert v-i\right\vert +2)+\\
&  +x(\left\vert v+(d-1)\right\vert +2)+x(\left\vert v-(d-1)\right\vert +2)+\\
&  +r(\left\vert v+d\right\vert +2)+r(\left\vert v-d\right\vert +2).
\end{align*}
Now, the definition of $G_{2}(n,d,x,s)$ implies%
\[
W(G_{2}(n,d,x,s))=W(B_{2}(n-2,d,x))+f(s)+f(d)+d-s+2.
\]
Plugging $s$ and $d$ into the formula for $f$ and simplifying the obtained
formula yields the result.
\end{proof}

Again, as a direct consequence of Lemma \ref{Lema_WienerGr2} we obtain the
following corollary.

\begin{corollary}
\label{Kor_Gr2Step}It holds that%
\begin{align*}
W(G_{2}(n,d,x,1))  &  =W(G_{2}(n,d,x,0))+1,\\
W(G_{2}(n,d,x,2))  &  =W(G_{2}(n,d,x,0))+6,\\
W(G_{2}(n,d,x,-1))  &  =W(G_{2}(n,d,x,0))+3.
\end{align*}

\end{corollary}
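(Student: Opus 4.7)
The plan is essentially to read off the corollary from Lemma \ref{Lema_WienerGr2}. In the closed-form expression
\[
W(G_{2}(n,d,x,s))=W(B_{2}(n-2,d,x))+(2d+4)n-2d^{2}-7d-6-2x+2s^{2}-s,
\]
the parameter $s$ appears only through the term $2s^{2}-s$; every other summand depends on $n,d,x$ alone. So the natural route is to fix $n,d,x$, form the three differences $W(G_{2}(n,d,x,s))-W(G_{2}(n,d,x,0))$, and observe that they collapse to $2s^{2}-s$.

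In order, I would: first write down the formula from Lemma \ref{Lema_WienerGr2} for a generic $s$; second, plug in $s=0$ to get the baseline value, noting that $2\cdot 0^{2}-0=0$, so $W(G_{2}(n,d,x,0))=W(B_{2}(n-2,d,x))+(2d+4)n-2d^{2}-7d-6-2x$; third, subtract this from the same expression evaluated at $s=1,2,-1$, which simply leaves $2s^{2}-s$. Computing $2(1)^{2}-1=1$, $2(2)^{2}-2=6$, and $2(-1)^{2}-(-1)=3$ yields the three claimed identities.

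There is no real obstacle here, since all the combinatorial work has been done in proving Lemma \ref{Lema_WienerGr2}; the only thing to verify is that the values of $s\in\{-1,0,1,2\}$ are indeed permissible in the definition of $G_{2}(n,d,x,s)$, but this is guaranteed by the hypotheses under which $G_{2}(n,d,x,s)$ is defined (the same set of admissible $s$ is already used in Corollary \ref{Kor_Gr1step}). Thus the corollary follows by direct substitution, and the proof is a one-line arithmetic check of the quadratic $2s^{2}-s$ at $s=-1,1,2$.
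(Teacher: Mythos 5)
Your proposal is correct and matches the paper exactly: the paper presents this corollary as a direct consequence of Lemma \ref{Lema_WienerGr2}, precisely because the dependence on $s$ is isolated in the term $2s^{2}-s$, and evaluating it at $s=1,2,-1$ gives $1$, $6$, and $3$. Nothing further is needed.
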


As in the case of large $d,$ the main tool in obtaining the results will be
the following lemma.

\begin{lemma}
\label{Lema_Gr2WienerKcomplete}Let $n,d$, $x$ and $s$ be integers for which
$G_{2}(n,d,x,s)$ is defined. For $k=d-3$ the interval $[W(G_{2}%
(n,d,x,s)),W(G_{2}(n,d,x,s))+4k^{2}]$ is Wiener $4-$complete.
\end{lemma}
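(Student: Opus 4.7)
The plan is to mirror the proof of Lemma~\ref{Lema_Gr1WienerKcomplete} by invoking Theorem~\ref{Tm_CaterpillarWienerComplete} with the degree-$4$ vertex of $G_{2}(n,d,x,s)$ chosen to be $u_{s}$ itself. First I would record the degrees of the interior-path vertices of $G_{2}(n,d,x,s)$: in $B_{2}(n-2,d,x)$ the vertices $u_{-(d-2)},\ldots,u_{d-2}$ each carry a single pendant leaf (hence have total degree $3$), the vertices $u_{\pm(d-1)}$ carry $x$ leafs, and $u_{\pm d}$ carry $r=\tfrac{(n-2)-4d-2x+2}{2}$ leafs. Passing from $B_{2}(n-2,d,x)$ to $G_{2}(n,d,x,s)$ one extra leaf is appended at $u_{s}$ and one at $u_{d}$, so in $G_{2}(n,d,x,s)$ the vertex $u_{s}$ has degree $4$, the vertex $u_{d}$ has degree $r+2$, and every other vertex keeps its degree from $B_{2}$.

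Since $s\in\{-1,0,1,2\}$ and $d\geq 4$, the index $s$ lies strictly inside the range $[-(d-2),d-2]$, so $u_{s}$ is an interior vertex of the path whose two path-neighbors $u_{s-1}$ and $u_{s+1}$ are not leafs, while its other two neighbors (the original pendant of $B_{2}$ and the newly appended leaf) are leafs; this is exactly the configuration required by the definition of Transformation~A.

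It then remains to locate the largest $k$ for which $u_{s\pm j}$ has degree $3$ for every $j=1,\ldots,k-1$, which is the hypothesis of Theorem~\ref{Tm_CaterpillarWienerComplete}. The degree-$3$ interior-path vertices of $G_{2}(n,d,x,s)$ are precisely $u_{i}$ with $-(d-2)\leq i\leq d-2$ and $i\neq s$, so I need both $s-(k-1)$ and $s+(k-1)$ to lie in $[-(d-2),d-2]$. The tightest constraint occurs at $s=2$, where $s+(k-1)\leq d-2$ forces $k\leq d-3$; the other values $s\in\{-1,0,1\}$ give looser inequalities. Hence $k=d-3$ is uniformly admissible for all four values of $s$, and Theorem~\ref{Tm_CaterpillarWienerComplete} delivers the Wiener $4$-complete interval of width $4k^{2}=4(d-3)^{2}$ starting at $W(G_{2}(n,d,x,s))$.

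The only genuine obstacle is the degree bookkeeping: for each admissible $s$ one must verify that no heavy vertex among $u_{\pm(d-1)}$ or $u_{\pm d}$ (whose degree exceeds $3$ and would therefore invalidate the hypothesis of Theorem~\ref{Tm_CaterpillarWienerComplete}) falls inside the symmetric window $u_{s-(k-1)},\ldots,u_{s+(k-1)}$. The computation above shows that $s=2$ is the worst case and that $k=d-3$ keeps the window strictly inside the degree-$3$ block; once this uniform check is made, no further computation is required.
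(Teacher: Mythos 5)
Your proof is correct and follows essentially the same route as the paper: identify $u_{s}$ as the degree-$4$ vertex, observe that the degree-$3$ block of the interior path is $u_{-(d-2)},\ldots,u_{d-2}$, take the worst case $s=2$ to get $k=d-3$ (the paper's $k_{1}-2$ with $k_{1}=d-1$), and invoke Theorem~\ref{Tm_CaterpillarWienerComplete}. You simply spell out the degree bookkeeping that the paper's terse proof leaves implicit (the only nit: for $d=4$, $s=2$ sits on the boundary of $[-(d-2),d-2]$ rather than strictly inside, which is harmless here).
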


\begin{proof}
Let us denote $k_{1}=d-1.$ Note that $k_{1}$ is the half of the number of
leafs appended to the vertices $u_{\pm j}$ of the interior path of
$G_{2}(n,d,x,s)$ for $j=0,\ldots,k-1.$ Since $s\leq2,$ note that the
definition of $G=G_{2}(n,d,x,s)$ and Theorem
\ref{Tm_CaterpillarWienerComplete} imply the result for $k=k_{1}-2.$
\end{proof}

We first use Lemma \ref{Lema_Gr2WienerKcomplete} to cover interval between
$W(G_{2}(n,d,x,s))$ and $W(G_{2}(n,d,x-1,s)).$

\begin{lemma}
\label{Lema_WienerGr2stepX}Let $n,d$, $x\geq2$ and $s$ be integers for which
$G_{2}(n,d,x,s)$ is defined. For $d\geq\frac{1}{2}(\sqrt{2n-8}+6)$ the
interval%
\[
\lbrack W(G_{2}(n,d,x,s)),W(G_{2}(n,d,x-1,s))]
\]
is Wiener $4-$complete.
\end{lemma}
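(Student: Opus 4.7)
The approach mirrors the proof of Lemma \ref{Lema_WienerGr1stepX}. The plan is first to obtain an exact expression for the gap $W(G_{2}(n,d,x-1,s))-W(G_{2}(n,d,x,s))$ using the formula of Lemma \ref{Lema_WienerGr2} together with the expression for $W(B_{2}(n-2,d,x))$; then to bound this gap from above by a quantity not depending on $x$; and finally to apply Lemma \ref{Lema_Gr2WienerKcomplete} with $k=d-3$ to cover it by a Wiener $4$-complete interval, deriving the lower bound on $d$.

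For the first step, only the $x$-dependent terms matter. Those coming from $W(B_{2}(n-2,d,x))$ are $8x-8dx-2x^{2}$, and Lemma \ref{Lema_WienerGr2} contributes an additional $-2x$. A direct subtraction at $x-1$ and $x$ gives $W(G_{2}(n,d,x-1,s))-W(G_{2}(n,d,x,s))=8d+4x-8$, which is divisible by $4$ (so the Wiener $4$-complete intervals at consecutive $x$ will glue together correctly once the sizes match). Because this gap is increasing in $x$, I would bound it above using the largest admissible value of $x$. Since $B_{2}(n-2,d,x)$ requires $x\leq (n-4d)/2$, substitution gives the uniform bound
\[
W(G_{2}(n,d,x-1,s))-W(G_{2}(n,d,x,s))\leq 2n-8,
\]
valid for every $x\geq 2$ for which $G_{2}(n,d,x,s)$ is defined.

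In the final step, Lemma \ref{Lema_Gr2WienerKcomplete} provides a Wiener $4$-complete interval of length $4(d-3)^{2}$ starting at $W(G_{2}(n,d,x,s))$, so it suffices to guarantee $4(d-3)^{2}\geq 2n-8$. Solving this quadratic inequality in $d$ is routine and yields precisely $d\geq \tfrac{1}{2}(\sqrt{2n-8}+6)$, matching the hypothesis. The only real obstacle is the bookkeeping in extracting the $x$-dependent part of $W(G_{2}(n,d,x,s))$ and checking that divisibility by $4$ holds; everything else is a quadratic estimate of the same shape as in Lemma \ref{Lema_WienerGr1stepX}.
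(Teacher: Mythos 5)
Your proposal is correct and follows essentially the same route as the paper: bound the gap $W(G_{2}(n,d,x-1,s))-W(G_{2}(n,d,x,s))$ uniformly by its value at the largest admissible $x$, namely $2(n-5)+2=2n-8$, then invoke Lemma \ref{Lema_Gr2WienerKcomplete} with $k=d-3$ and solve $4(d-3)^{2}\geq 2n-8$ to get $d\geq\tfrac{1}{2}(\sqrt{2n-8}+6)$. Incidentally, your exact gap $8d+4x-8$ is what the paper's own formulas in Lemma \ref{Lema_WienerGr2} actually yield (the paper's in-text claim of $4d+4x-8$ appears to be a typo), and it is consistent with the bound $2n-8$ at $x_{2}^{\max}$.
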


\begin{proof}
First note that for $x_{2}^{\max}=\frac{(n-2)-4d+2}{2}$ it holds that%
\begin{align*}
&  W(G_{2}(n,d,x-1,s))-W(G_{2}(n,d,x,s))\overset{}{\leq}\\
&  \overset{}{\leq}W(G_{2}(n,d,x_{2}^{\max}-1,s))-W(G_{2}(n,d,x_{2}^{\max
},s))\overset{}{=}\\
&  \overset{}{=}2(n-5)+2.
\end{align*}
Therefore, Lemma \ref{Lema_Gr2WienerKcomplete} implies it is sufficient to
find for which $n$ and $d$ it holds that $4k^{2}\geq2(n-5)+2$ where $k=d-3.$
By simple calculation it is easy to establish that the inequality holds for
$d\geq\frac{1}{2}(\sqrt{2n-8}+6)$ so the theorem is proved.
\end{proof}

Again, it is easy to show that $W(G_{2}(n,d,x-1,s))-W(G_{2}(n,d,x,s))=4d+4x-8
$ which is divisible by $4.$ Therefore, using Lema \ref{Lema_WienerGr2stepX}
we can again "glue" the interval for different values of $x$ into one bigger
interval which will be "roughly" Wiener complete when taking values of
$W(G_{2}(n,d,x,s))$ for every $s=-1,0,1,2.$ The next thing is to cover the gap
between $W(G_{2}(n,d-1,1,s))$ and $W(G_{2}(n,d,x_{2}^{\max},s))$ which equals
$n-3$ plus the gap of $6$ which arises from "rough" ends of Wiener complete
interval for given $n$ and $d.$

\begin{lemma}
\label{Lema_WienerGr2stepD}Let $n,d$, $x_{2}^{\max}=\frac{(n-2)-4d+2}{2}$ and
$s$ be integers for which $G_{2}(n,d,x_{2}^{\max},s)$ and $G_{2}(n,d-1,1,s)$
is defined. For $d\geq\frac{1}{2}(\sqrt{n+3}+8)$ the interval
\[
\lbrack W(G_{2}(n,d-1,1,s)),W(G_{2}(n,d,x_{2}^{\max},s))+6]
\]
is Wiener $4-$complete.
\end{lemma}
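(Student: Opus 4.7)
The plan is to mirror the proof of Lemma \ref{Lema_WienerGr1stepD}, adapted to the $G_{2}$ family. The key quantity is the horizontal gap
\[
W(G_{2}(n, d, x_{2}^{\max}, s)) - W(G_{2}(n, d-1, 1, s)),
\]
which I expect to equal $n - 3$. Applying Lemma \ref{Lema_WienerGr2} to both terms splits the difference into a routine polynomial contribution (coming from the explicit correction terms in $n, d, x, s$ added to $W(B_{2})$) together with the $B_{2}$-difference $W(B_{2}(n-2, d, x_{2}^{\max})) - W(B_{2}(n-2, d-1, 1))$. Substituting $x_{2}^{\max} = (n-4d)/2$, the polynomial part simplifies exactly to $n - 3$ and the $B_{2}$-difference vanishes. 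Adding the rough-edge contribution of $6$ supplied by Corollary \ref{Kor_Gr2Step}, the total width to be covered is exactly $n + 3$.

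Next, I would apply Lemma \ref{Lema_Gr2WienerKcomplete} to the tree $G_{2}(n, d-1, 1, s)$ (that is, with $d$ replaced by $d - 1$), obtaining a Wiener $4$-complete interval of length $4k^{2}$ starting at $W(G_{2}(n, d-1, 1, s))$ with $k = (d-1) - 3 = d - 4$. It then suffices to guarantee $4(d-4)^{2} \ge n + 3$, which rearranges exactly to $d \ge \tfrac{1}{2}(\sqrt{n+3} + 8)$, the hypothesis of the lemma.

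The main obstacle is verifying the identity $W(B_{2}(n-2, d, x_{2}^{\max})) = W(B_{2}(n-2, d-1, 1))$. A direct expansion via the closed-form formula for $W(B_{2})$ works but is bookkeeping-heavy. A much cleaner route is to observe that the two caterpillars are in fact isomorphic: when $x = x_{2}^{\max}$, the parameter $r$ in the definition of $B_{2}$ equals $0$, so the endpoints $u_{\pm d}$ of the interior path of $B_{2}(n-2, d, x_{2}^{\max})$ are themselves leaves, and reinterpreting them as ordinary pendants of $u_{\pm(d-1)}$ produces precisely $B_{2}(n-2, d-1, 1)$. This is the exact $B_{2}$-analogue of identity (\ref{For_Lijepljenje}); once it is recorded, the remainder of the argument is routine arithmetic.
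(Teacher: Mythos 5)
Your proposal is correct and follows essentially the same route as the paper: the paper likewise reduces the claim to the gap identity $W(G_{2}(n,d,x_{2}^{\max},s))+6-W(G_{2}(n,d-1,1,s))=n+3$ (asserted there without computation) and then applies Lemma \ref{Lema_Gr2WienerKcomplete} at parameter $d-1$, i.e.\ with $k=(d-1)-3=d-4$, obtaining the bound $d\geq\frac{1}{2}(\sqrt{n+3}+8)$. Your extra verification of the gap via the isomorphism $B_{2}(n-2,d,x_{2}^{\max})\cong B_{2}(n-2,d-1,1)$ (the $r=0$ observation) is sound and supplies a detail the paper glosses over; it also sidesteps a typo in the paper's own proof, which writes the sufficient condition as ``$4k^{2}\leq n-3$'' where it should read $4k^{2}\geq n+3$, the inequality your write-up states correctly.
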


\begin{proof}
Since
\[
W(G_{2}(n,d,x_{2}^{\max},s))+6-W(G_{2}(n,d-1,1,s))=n+3,
\]
Lemma \ref{Lema_Gr2WienerKcomplete} implies it is sufficient to find $n$ and
$d$ for which it holds that $4k^{2}\leq n-3$ where $k=(d-1)-3.$ By simple
calculation one obtains that inequality holds for $d\geq\frac{1}{2}(\sqrt
{n+3}+8)$ which proves the theorem.
\end{proof}

\bigskip

Therefore, using graphs $G_{1}(n,d,x,s)$ and $G_{2}(n,d,x,s)$ we obtained two
big Wiener complete intervals, which it would be nice if we could "glue"
together into one big Wiener complete interval. In order to do that, note that
the equality (\ref{For_Lijepljenje}) implies%
\[
G_{2}(n,d_{2}^{\max},1,s)=G_{1}(n,d_{1}^{\min},x_{1}^{\max},s)
\]
for $d_{2}^{\max}=\left\lfloor \frac{n-2}{4}\right\rfloor $, $d_{1}^{\min
}=\left\lceil \frac{n-4}{4}\right\rceil $ and $x_{1}^{\max}=\frac
{4+4d_{1}^{\min}-(n-2)}{2}.$ Now we can state the theorem which is our main result.

\begin{theorem}
\label{Tm_MAINevenN}Let $n\geq30$, $d_{2}^{\min}=\left\lceil \frac{1}{2}%
(\sqrt{2n-8}+6)\right\rceil ,$ $x_{2}^{\max}=\frac{(n-2)-4d_{2}^{\min}+2}{2}$
and $d_{1}^{\max}=\left\lfloor \frac{1}{2}(n-\sqrt{2n-8}-8)\right\rfloor .$
The interval%
\[
\lbrack W(G_{2}(n,d_{2}^{\min},x_{2}^{\max},-1)),W(G_{1}(n,d_{1}^{\max},1,1))]
\]
is Wiener complete.
\end{theorem}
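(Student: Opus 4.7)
The plan is to build, for each $s \in \{-1, 0, 1, 2\}$, a chain of Wiener $4$-complete intervals joining the $G_2$-regime ($d \in [d_2^{\min}, d_2^{\max}]$) to the $G_1$-regime ($d \in [d_1^{\min}, d_1^{\max}]$) via equation \eqref{For_Lijepljenje}, and then to combine the four chains using Corollaries \ref{Kor_Gr1step} and \ref{Kor_Gr2Step}. Both corollaries give the same $s$-shift pattern $(\delta_{-1}, \delta_0, \delta_1, \delta_2) = (3, 0, 1, 6)$, and since $\{0, 1, 3, 6\}$ is a complete residue system modulo $4$, combining the four chains will cover every residue class in every local window.

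Fix $s$. The hypothesis $n \geq 30$ makes $d_2^{\min}$ large enough to satisfy the thresholds of Lemmas \ref{Lema_WienerGr2stepX} and \ref{Lema_WienerGr2stepD} for every $d \in [d_2^{\min}, d_2^{\max}]$, and symmetrically $d_1^{\max}$ satisfies the thresholds of Lemmas \ref{Lema_WienerGr1stepX} and \ref{Lema_WienerGr1stepD}. Iterated application of Lemma \ref{Lema_WienerGr2stepX} glues the within-$d$ $4$-complete intervals across $x$ into a single $4$-complete interval $[W(G_2(n,d,x_2^{\max},s)), W(G_2(n,d,1,s))]$, and Lemma \ref{Lema_WienerGr2stepD} supplies the $4$-complete bridging interval $[W(G_2(n,d-1,1,s)), W(G_2(n,d,x_2^{\max},s))+6]$ between consecutive $d$-values, yielding a chain of $4$-complete pieces covering the $G_2$-range up to $W(G_2(n,d_2^{\max},1,s))$. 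The analogous construction with Lemmas \ref{Lema_WienerGr1stepX} and \ref{Lema_WienerGr1stepD} produces a chain on the $G_1$-range, and equation \eqref{For_Lijepljenje} identifies $G_2(n,d_2^{\max},1,s) = G_1(n,d_1^{\min},x_1^{\max},s)$, so the two chains combine into a chain $M_s$ from $W(G_2(n,d_2^{\min},x_2^{\max},s))$ to $W(G_1(n,d_1^{\max},1,s))$.

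Let $L = W(G_2(n,d_2^{\min},x_2^{\max},0))$ and $R = W(G_1(n,d_1^{\max},1,0))$. For a within-$d$ piece with $s=0$ endpoints $A$ and $B$, the four $s$-translates form $4$-complete sets $\{A+\delta_s, A+\delta_s+4, \ldots, B+\delta_s\}$; their union covers every integer in $[A, B+6]$ except $A+2$ (missed because the $s=2$ translate only starts at $A+6$) and $B+4, B+5$ (missed because $s \in \{0, 1, -1\}$ top out at $B, B+1, B+3$, while the $s=2$ sequence jumps from $B+2$ to $B+6$). The missed $A+2$ lies inside the preceding inter-$d$ gap-piece, whose four-fold union similarly covers it; the missed $B+4, B+5$ lie inside the following gap-piece. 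At the global bottom $d = d_2^{\min}$ there is no preceding gap, so $L+2$ stays uncovered, and at the global top $d = d_1^{\max}$ there is no following gap, so $R+4, R+5$ stay uncovered. The interval $[L+3, R+1]$ avoids all three uncovered integers, and since $L+3 = W(G_2(n,d_2^{\min},x_2^{\max},-1))$ and $R+1 = W(G_1(n,d_1^{\max},1,1))$, this interval is Wiener complete.

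The most delicate technical point is that within a single fixed $s$, consecutive within-$d$ $4$-complete pieces sit in different residue classes mod $4$ (separated by the odd shift $(n-3) \bmod 4$), so no single $M_s$ is a $4$-complete arithmetic progression on its own. The $6$-unit overshoot built into Lemmas \ref{Lema_WienerGr1stepD} and \ref{Lema_WienerGr2stepD} compensates exactly for this residue jump once the four $s$-translates are overlaid, and the boundary $d_2^{\max} = d_1^{\min}$ between the $G_2$- and $G_1$-regimes is seamless thanks to equation \eqref{For_Lijepljenje}; these two observations anchor the final bookkeeping.
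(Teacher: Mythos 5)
Your proposal is correct and takes essentially the same approach as the paper: the paper's own argument for Theorem \ref{Tm_MAINevenN} is exactly the accumulated machinery of Lemmas \ref{Lema_WienerGr1stepX}, \ref{Lema_WienerGr1stepD}, \ref{Lema_WienerGr2stepX}, \ref{Lema_WienerGr2stepD}, the $s$-shift corollaries, and the identification (\ref{For_Lijepljenje}), glued in precisely the order you describe. In fact the paper states the theorem without writing out the endpoint bookkeeping at all, so your explicit accounting of the uncovered points $L+2$, $R+4$, $R+5$ (which explains why the interval runs from the $s=-1$ value at the bottom to the $s=1$ value at the top) is a more careful rendering of the same argument rather than a different one.
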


\begin{corollary}
For even $n\geq30$ it holds that $\left\vert W[\mathcal{T}_{n}]\right\vert
\geq\left\vert W^{int}[\mathcal{T}_{n}]\right\vert \geq\frac{1}{6}n^{3}%
-\frac{1}{\sqrt{2}}n^{5/2}+O(n^{2}).$
\end{corollary}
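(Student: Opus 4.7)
My plan is to obtain the corollary as an immediate quantitative consequence of Theorem \ref{Tm_MAINevenN}. That theorem guarantees that the interval
\[
I = [\,W(G_{2}(n,d_{2}^{\min},x_{2}^{\max},-1)),\; W(G_{1}(n,d_{1}^{\max},1,1))\,]
\]
is Wiener complete, so $I \subseteq W^{int}[\mathcal{T}_{n}] \subseteq W[\mathcal{T}_{n}]$ and both cardinalities are at least $|I| = W(G_{1}(n,d_{1}^{\max},1,1)) - W(G_{2}(n,d_{2}^{\min},x_{2}^{\max},-1)) + 1$. The proof then reduces to showing $|I| \geq \tfrac{1}{6}n^{3} - \tfrac{1}{\sqrt{2}}\,n^{5/2} + O(n^{2})$, which is a purely numerical task.

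To estimate the upper endpoint I would combine Lemma \ref{Lema_WienerGr1} with the closed form for $W(B_{1})$ from Lemma \ref{lema_WienerBasic1} (applied to $W(B_{1}(n-2,d_{1}^{\max},1))$), giving an explicit cubic polynomial in $n$ and $d_{1}^{\max}$. Plugging in $d_{1}^{\max} = \lfloor \tfrac{1}{2}(n-\sqrt{2n-8}-8)\rfloor$ and expanding in powers of $n$ and $\sqrt{2n-8}$, the leading term should be $\tfrac{1}{6}n^{3}$, reflecting that this choice of $d$ makes the interior path of $G_{1}$ have length nearly $n$ so the caterpillar is very close to the extremal path $P_{n}$. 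The lower endpoint is handled symmetrically using Lemma \ref{Lema_WienerGr2} together with the formula for $W(B_{2})$, substituting $d_{2}^{\min} = \lceil \tfrac{1}{2}(\sqrt{2n-8}+6)\rceil$ and $x_{2}^{\max} = \tfrac{1}{2}((n-2)-4d_{2}^{\min}+2)$; since here $d_{2}^{\min} = O(\sqrt{n})$, no $n^{3}$ term appears, and the whole expression is of order $n^{5/2}$.

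The main obstacle is purely computational: one must expand both endpoints to one order below the leading term and verify that the two contributions of order $\sqrt{2n-8}\cdot n^{2} \sim \sqrt{2}\,n^{5/2}$ combine, after subtraction, to exactly $-\tfrac{1}{\sqrt{2}}\,n^{5/2} = -\tfrac{\sqrt{2}}{2}\,n^{5/2}$ with the correct sign. Floors and ceilings in the definitions of $d_{1}^{\max},d_{2}^{\min},x_{2}^{\max}$ perturb the exact values by at most a constant, so after being multiplied into the polynomial coefficients they contribute only $O(n^{2})$ error, which is absorbed into the stated remainder. No deeper ideas are needed beyond careful polynomial algebra together with the observation that the ``$+1$'' from counting the endpoints of $I$ is also absorbed into $O(n^{2})$.
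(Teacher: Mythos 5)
Your proposal is correct and follows essentially the same route as the paper: the paper likewise invokes Theorem \ref{Tm_MAINevenN} to bound $\left\vert W^{int}[\mathcal{T}_{n}]\right\vert$ below by the length of the Wiener complete interval, then computes $W(G_{1}(n,d_{1}^{\max},1,1))-W(G_{2}(n,d_{2}^{\min},x_{2}^{\max},-1))$ explicitly via Lemmas \ref{Lema_WienerGr1} and \ref{Lema_WienerGr2}, obtaining $\frac{n^{3}}{6}-\frac{\sqrt{n^{5}-n^{4}}}{\sqrt{2}}+O(n^{2})$. Your additional care about the floors/ceilings and the endpoint count is a harmless refinement of the same computation (the paper in fact substitutes the exact, unrounded values of $d_{1}^{\max}$ and $d_{2}^{\min}$).
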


\begin{proof}
Using Theorem \ref{Tm_MAINevenN} and Lemmas \ref{Lema_WienerGr1} and
\ref{Lema_WienerGr2} it is easy to calculate that
\begin{align*}
\left\vert W^{int}[\mathcal{T}_{n}]\right\vert  &  \geq W(G_{1}(n,d_{1}^{\max
},1,1))-W(G_{2}(n,d_{2}^{\min},x_{2}^{\max},-1))=\\
&  =\frac{n^{3}}{6}-\frac{\sqrt{n^{5}-n^{4}}}{\sqrt{2}}-3n^{2}+\frac{10}%
{3}\sqrt{2n^{3}-8n^{2}}+\frac{143n}{6}+\\
&  +25\sqrt{2n-8}-25.
\end{align*}

\end{proof}

We can now prove Conjectures \ref{Con1} and \ref{Con2} we stated in the
introduction. Namely, since $\left\vert W^{int}[\mathcal{T}_{n}]\right\vert
\leq\left\vert W[\mathcal{T}_{n}]\right\vert \leq W(P_{n})-W(S_{n})=\frac
{1}{6}n^{3}-n^{2}+\frac{11}{6}n-1$, then the following holds.

\begin{theorem}
For even $n\geq30$ it holds that $\left\vert W^{int}[\mathcal{T}%
_{n}]\right\vert =\left\vert W[\mathcal{T}_{n}]\right\vert =\frac{1}{6}%
n^{3}+O(n^{2})$.
\end{theorem}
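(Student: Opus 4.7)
The plan is to prove the theorem via a simple sandwich, where all of the serious work (a constructive lower bound on $|W^{int}[\mathcal{T}_n]|$) has already been carried out in the preceding corollary. Nothing new needs to be built; I would just assemble two bounds and close the argument in a couple of lines.

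For the lower bound, I would directly invoke the previous corollary, which yields
\[
|W^{int}[\mathcal{T}_n]| \geq \frac{1}{6}n^3 - \frac{1}{\sqrt{2}}n^{5/2} + O(n^2),
\]
so in particular the leading term is $\frac{1}{6}n^3$ with all lower-order terms negligible compared to $n^3$. For the upper bound, the observation is that any tree on $n$ vertices has Wiener index in the integer interval $[W(S_n), W(P_n)]$, since $S_n$ and $P_n$ are the unique extremal trees for the Wiener index on $\mathcal{T}_n$. Consequently,
\[
|W^{int}[\mathcal{T}_n]| \leq |W[\mathcal{T}_n]| \leq W(P_n) - W(S_n) + 1 = \tfrac{1}{6}n^3 + O(n^2),
\]
using the computation $W(P_n) - W(S_n) = \frac{1}{6}n^3 - n^2 + \frac{11}{6}n - 1$ already recorded in the paragraph preceding the theorem. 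Combining the two chains of inequalities pins both $|W^{int}[\mathcal{T}_n]|$ and $|W[\mathcal{T}_n]|$ to $\frac{1}{6}n^3 + O(n^2)$, as claimed. The hypothesis $n\geq 30$ comes along for free, since it is exactly the range in which the corollary applies.

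There is essentially no genuine obstacle at this stage; it is a two-line squeeze. The only subtlety is cosmetic: the lower bound inherited from the corollary carries a $-\frac{1}{\sqrt{2}}n^{5/2}$ term, which is technically larger than $O(n^2)$. However, since $n^{5/2} = o(n^3)$, the leading-coefficient interpretation "$\frac{1}{6}n^3$ plus a correction of strictly lower order than $n^3$" is preserved, which is plainly the sense in which the theorem is stated. All the genuine difficulty lives in the corollary and in the preceding construction of the caterpillar families $G_1(n,d,x,s)$ and $G_2(n,d,x,s)$ together with the repeated use of Transformation A, rather than in the theorem itself.
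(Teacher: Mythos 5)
Your proof is correct and is essentially identical to the paper's own argument: the paper likewise sandwiches $\left\vert W^{int}[\mathcal{T}_{n}]\right\vert \leq\left\vert W[\mathcal{T}_{n}]\right\vert \leq W(P_{n})-W(S_{n})=\frac{1}{6}n^{3}-n^{2}+\frac{11}{6}n-1$ against the corollary's constructive lower bound. Your aside about the $-\frac{1}{\sqrt{2}}n^{5/2}$ term exceeding $O(n^{2})$ is a fair observation about a looseness in the statement that the paper itself glosses over, but it does not affect the leading-order conclusion.
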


\section{Odd number of vertices}

\begin{definition}
Let $n,d$ and $x$ be integers for which $B_{1}(n-1,d,x)$ is defined. For
$s=0,1$ caterpillar $G_{3}(n,d,x,s)$ is a caterpillar on odd number of
vertices $n,$ obtained from $B_{1}(n-1,d,x)$ by appending a leaf to vertex
$u_{s}$ of path $P=u_{-d}\ldots u_{-1}u_{0}u_{1}\ldots u_{d}$ in
$B_{1}(n-1,d,x).$
\end{definition}

\begin{lemma}
\label{Lema_WienerGr3}Let $n,d$, $x$ and $s$ be integers for which
$G_{3}(n,d,x,s)$ is defined. Then%
\[
W(G_{3}(n,d,x,s))=W(B_{1}(n-1,d,x))+\frac{n^{2}}{4}-dn+2d^{2}+5d+\frac{11}%
{4}-2x+2s^{2}.
\]

\end{lemma}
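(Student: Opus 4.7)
The plan is to mirror the proof of Lemma~\ref{Lema_WienerGr1} in the simpler setting where only one new vertex is attached. The tree $G_3(n,d,x,s)$ is obtained from $B_1(n-1,d,x)$ by appending a single pendant vertex $\ell$ to $u_s$, so the only new pairs contributing to the Wiener index are those of the form $(\ell,w)$ with $w\in V(B_1(n-1,d,x))$. Hence
\[
W(G_3(n,d,x,s))=W(B_1(n-1,d,x))+\sum_{w\in V(B_1(n-1,d,x))} d(\ell,w),
\]
and, because $\ell$ is pendant to $u_s$, each summand equals $d_{B_1}(u_s,w)+1$.

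I would then split that sum according to the four structural pieces of $B_1(n-1,d,x)$: the interior path $u_{-d},\ldots,u_d$; the $2k-1$ pendant leaves hanging off $u_{-(k-1)},\ldots,u_{k-1}$ where $k=\frac{n-2d-3}{2}$; and the two boundary leaves at $u_{x'}$ and $u_{-x'}$ with $x'=-d-1+x$. This reproduces exactly the auxiliary function
\[
f(v)=\sum_{i=-d}^{d}(|v-i|+1)+\sum_{i=-(k-1)}^{k-1}(|v-i|+2)+(|x'-v|+2)+(|{-x'}-v|+2)
\]
used in Lemma~\ref{Lema_WienerGr1}, only with the value of $k$ updated to reflect that the underlying basic graph is now $B_1(n-1,d,x)$ rather than $B_1(n-2,d,x)$. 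In these terms the identity becomes $W(G_3(n,d,x,s))=W(B_1(n-1,d,x))+f(s)$.

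It then remains to evaluate $f(s)$ for $s\in\{0,1\}$. Since $s$ lies well inside the symmetric ranges $[-d,d]$ and $[-(k-1),k-1]$, each absolute-value sum splits into two arithmetic progressions with an elementary closed form. Under the admissibility constraints we have $x'\leq -1$ and $s\in\{0,1\}$ lies between $x'$ and $-x'$, so the boundary-leaf contribution $(|x'-s|+2)+(|{-x'}-s|+2)$ evaluates to $2(d+1-x)+4$ \emph{independently} of $s$; this is the mechanism that forces the $s$-dependence of $f(s)$ to collapse to a pure $2s^2$ term, with no linear-in-$s$ correction. Collecting the constant terms and combining with the closed form for $W(B_1(n-1,d,x))$ from Lemma~\ref{lema_WienerBasic1} yields the stated identity. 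The main ``obstacle'' is not conceptual but purely algebraic bookkeeping, identical in spirit to the simplification step at the end of the proof of Lemma~\ref{Lema_WienerGr1}.
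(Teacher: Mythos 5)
Your proposal is correct and follows essentially the same route as the paper: the paper's proof likewise expands $W(G_{3}(n,d,x,s))-W(B_{1}(n-1,d,x))$ as the sum of $d(u_{s},w)+1$ over the path vertices, the $2k-1$ pendant leaves, and the two boundary leaves (written there as $(s-x'+2)+(-x'-s+2)$, confirming your observation that this piece is independent of $s$), and then simplifies. Your accounting of $k=\frac{n-2d-3}{2}$ and of the pure $2s^{2}$ dependence matches the paper's computation exactly.
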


\begin{proof}
Let $k=\frac{(n-1)-(2d+1)-1}{2}$, $x^{\prime}=-d-1+x.$ The definition of
$G_{3}(n,d,x,s)$ implies%
\begin{align*}
W(G_{3}(n,d,x,s))  &  =W(B_{1}(n-1,d,x))+\sum_{i=-d}^{d}(\left\vert
s-i\right\vert +1)+\\
&  +\sum_{i=-(k-1)}^{k-1}(\left\vert s-i\right\vert +2)+(s-x^{\prime}+2)+\\
&  +(-x^{\prime}-s+2).
\end{align*}
Simplifying this expression yields the result.
\end{proof}

As a direct consequence of Lemma \ref{Lema_WienerGr3} we obtain the following corollary.

\begin{corollary}
\label{Kor_Gr3Step}It holds that $W(G_{3}(n,d,x,1))=W(G_{3}(n,d,x,0))+2.$
\end{corollary}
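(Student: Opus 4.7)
The plan is simply to invoke Lemma~\ref{Lema_WienerGr3} twice and subtract. The formula
\[
W(G_{3}(n,d,x,s))=W(B_{1}(n-1,d,x))+\frac{n^{2}}{4}-dn+2d^{2}+5d+\frac{11}{4}-2x+2s^{2}
\]
depends on $s$ only through the term $2s^{2}$; every other summand is a function of $n$, $d$, and $x$ alone. Hence, fixing $n$, $d$, and $x$ for which both $G_{3}(n,d,x,0)$ and $G_{3}(n,d,x,1)$ are defined, the difference $W(G_{3}(n,d,x,1))-W(G_{3}(n,d,x,0))$ telescopes to $2\cdot 1^{2}-2\cdot 0^{2}=2$, which is exactly the claim.

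The only thing to double check is that both graphs are simultaneously defined, i.e. that appending a leaf at $u_{0}$ and at $u_{1}$ are both admissible operations on $B_{1}(n-1,d,x)$. Since the definition of $G_{3}$ admits any $s\in\{0,1\}$ with no further restriction on $d$ or $x$ beyond those needed to define $B_{1}(n-1,d,x)$, this is automatic whenever the corollary's hypothesis (that the relevant $G_{3}$ instances are defined) is in force. There is no real obstacle: the corollary is a one-line algebraic consequence of the closed form in Lemma~\ref{Lema_WienerGr3}, perfectly analogous to Corollary~\ref{Kor_Gr1step} and Corollary~\ref{Kor_Gr2Step} for the even case, except that here only $s\in\{0,1\}$ occurs so only the single identity involving $s=1$ is recorded.
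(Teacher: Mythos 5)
Your proof is correct and matches the paper exactly: the paper also presents this corollary as a direct consequence of Lemma~\ref{Lema_WienerGr3}, whose formula depends on $s$ only through the term $2s^{2}$, so subtracting the $s=0$ and $s=1$ cases gives the difference $2$.
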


We now want to apply Theorem \ref{Tm_CaterpillarWienerComplete} on
$G_{3}(n,d,x,s),$ i.e. establish the value of $k$ in the case of this special graph.

\begin{lemma}
\label{Lema_WienerGr3Kcomplete}Let $n,d$, $x$ and $s$ be integers for which
$G_{3}(n,d,x,s)$ is defined. For $k=\frac{1}{2}n-d-\frac{5}{2}$ the interval
$[W(G_{3}(n,d,x,s)),W(G_{3}(n,d,x,s))+4k^{2}]$ is Wiener $4-$complete.
\end{lemma}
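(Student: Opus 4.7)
My plan is to mirror the proof of Lemma \ref{Lema_Gr1WienerKcomplete} essentially verbatim, since the construction of $G_3(n,d,x,s)$ is the odd-$n$ counterpart of $G_1(n,d,x,s)$: it is obtained from $B_1(n-1,d,x)$ by appending a single extra leaf at $u_s$ (rather than two leaves at $u_s$ and $u_d$). The conclusion will follow from a single application of Theorem \ref{Tm_CaterpillarWienerComplete} at the vertex $u_s$.

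First I would set $k_1=\tfrac{(n-1)-(2d+1)-1}{2}=\tfrac{n-2d-3}{2}$, so that in $B_1(n-1,d,x)$ there is a cluster of appended leaves at the vertices $u_{-(k_1-1)},\ldots,u_{k_1-1}$ of the interior path. Next I would read off the degrees in $G_3(n,d,x,s)$: since $s\in\{0,1\}$ lies inside this cluster, the vertex $u_s$ begins with degree $3$ in $B_1(n-1,d,x)$ (two path neighbours plus one appended leaf) and receives one further leaf from the $G_3$ construction, so $\delta(u_s)=4$, with exactly two of its neighbours being leaves; and every $u_{s\pm j}$ with $s\pm j\in\{-(k_1-1),\ldots,k_1-1\}\setminus\{s\}$ still carries exactly one appended leaf, hence has degree $3$.

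Then I would invoke Theorem \ref{Tm_CaterpillarWienerComplete} with $i$ equal to the index $s$ and with $k$ as large as the degree pattern permits. The constraint is that all of $u_{s\pm 1},\ldots,u_{s\pm(k-1)}$ must remain inside the leaf-cluster, i.e.\ $|s|+(k-1)\le k_1-1$; since $s\le 1$ the binding case is $s=1$, which gives $k\le k_1-1$. Choosing equality yields
\[
k=k_1-1=\tfrac{n-2d-3}{2}-1=\tfrac{1}{2}n-d-\tfrac{5}{2},
\]
exactly the value in the lemma statement, and Theorem \ref{Tm_CaterpillarWienerComplete} then produces the desired Wiener $4$-complete interval $[W(G_3(n,d,x,s)),W(G_3(n,d,x,s))+4k^2]$.

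There is essentially no obstacle here: the argument is purely a bookkeeping exercise on degrees and on the radius of the leaf-cluster around $u_0$. The only point requiring a moment of care is that the worst case $s=1$ costs exactly one unit off $k_1$ (whereas in the $G_1$ setting $s\le 2$ cost two units, giving $k=k_1-2$ there); this shift is what accounts for the different $k$ formula here as compared with Lemma \ref{Lema_Gr1WienerKcomplete}.
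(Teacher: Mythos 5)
Your proposal is correct and follows essentially the same route as the paper's own proof: the paper likewise sets $k_{1}=\frac{(n-1)-(2d+1)-1}{2}$, observes that $s\leq1$ forces the loss of one unit from $k_{1}$, and applies Theorem \ref{Tm_CaterpillarWienerComplete} at $u_{s}$ to get $k=k_{1}-1=\frac{1}{2}n-d-\frac{5}{2}$. Your version merely spells out the degree bookkeeping that the paper leaves implicit.
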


\begin{proof}
Let us denote $k_{1}=\frac{(n-1)-(2d+1)-1}{2}.$ Note that $k_{1}$ is the half
of the number of leafs appended to the vertices $u_{\pm j}$ of the interior
path of $G_{3}(n,d,x,s)$ for $j=0,\ldots,k-1.$ Since $s\leq1,$ note that the
definition of $G_{3}(n,d,x,s)$ and Theorem \ref{Tm_CaterpillarWienerComplete}
imply the result for $k=k_{1}-1.$
\end{proof}

So, let us now establish for which values of $d$ the gap between
$W(G_{3}(n,d,x,s))$ and $W(G_{3}(n,d,x-1,s))$ is smaller than $4k^{2}$ where
$k=\frac{1}{2}n-d-\frac{5}{2}$.

\begin{lemma}
\label{Lema_WienerGr3StepX}Let $n,d$, $x\geq2$ and $s$ be integers for which
$G_{3}(n,d,x,s)$ is defined. For $d\leq\frac{1}{2}(n-\sqrt{2n-6}-5)$ the
interval%
\[
\lbrack W(G_{3}(n,d,x,s)),W(G_{3}(n,d,x-1,s))]
\]
is Wiener $4-$complete.
\end{lemma}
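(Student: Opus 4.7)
My plan is to mirror the structure of Lemma \ref{Lema_WienerGr1stepX} and Lemma \ref{Lema_WienerGr2stepX}. First I would compute the gap $W(G_{3}(n,d,x-1,s))-W(G_{3}(n,d,x,s))$ explicitly. Using Lemma \ref{Lema_WienerGr3} together with the formula for $W(B_{1}(n-1,d,x))$ from Lemma \ref{lema_WienerBasic1}, the only $x$-dependent contributions to $W(G_{3}(n,d,x,s))$ are $-2x(n-1)+2x^{2}$ coming from $B_{1}(n-1,d,x)$ and the extra $-2x$ in the formula of Lemma \ref{Lema_WienerGr3}, giving total $x$-dependence $-2xn+2x^{2}$. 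Subtracting then yields
\[
W(G_{3}(n,d,x-1,s))-W(G_{3}(n,d,x,s))=2n-4x+2.
\]
Since $n$ is odd, $2n+2$ is a multiple of $4$, so this difference is divisible by $4$ for every admissible $x$. As a function of $x\geq 2$ it is maximised at $x=2$, where it equals $2n-6$, so it suffices to bound this worst-case gap.

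Next I would invoke Lemma \ref{Lema_WienerGr3Kcomplete}, which asserts that the interval $[W(G_{3}(n,d,x,s)),W(G_{3}(n,d,x,s))+4k^{2}]$ is Wiener $4$-complete for $k=\frac{1}{2}n-d-\frac{5}{2}$. To guarantee that this interval already reaches $W(G_{3}(n,d,x-1,s))$, it is enough to require $4k^{2}\geq 2n-6$, that is $k\geq\frac{1}{2}\sqrt{2n-6}$. Substituting the value of $k$ and solving for $d$ yields exactly $d\leq\frac{1}{2}(n-\sqrt{2n-6}-5)$, which is the bound in the statement.

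The only step that requires care is the divisibility check on the gap: if $2n-4x+2$ were not a multiple of $4$, the endpoint $W(G_{3}(n,d,x-1,s))$ would fall off the lattice on which Lemma \ref{Lema_WienerGr3Kcomplete} operates, and the gluing would fail. Fortunately the parity of $n$ (odd) resolves this automatically, consistent with Theorem \ref{tm_WienerEven} which guarantees all relevant Wiener values are even. Apart from that, the argument is a direct translation of the even-case proofs and I do not anticipate any genuine obstacle beyond the explicit calculation of the $x$-difference.
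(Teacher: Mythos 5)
Your argument is correct and follows essentially the same route as the paper: bound the worst-case gap by its value at $x=2$, namely $2(n-4)+2=2n-6$, and then apply Lemma \ref{Lema_WienerGr3Kcomplete} by solving $4k^{2}\geq 2n-6$ with $k=\frac{1}{2}n-d-\frac{5}{2}$ to get the stated bound on $d$. Your explicit computation of the difference $2n-4x+2$ and the divisibility-by-$4$ check merely make precise what the paper states in the remark immediately following the lemma, so there is no substantive difference.
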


\begin{proof}
First note that
\begin{align*}
G_{3}(n,d,x-1,s)-G_{3}(n,d,x,s)  &  \leq G_{3}(n,d,2,s)-G_{3}(n,d,1,s)=\\
&  =2(n-4)+2.
\end{align*}
Therefore, Lemma \ref{Lema_WienerGr3Kcomplete} implies it is sufficient to
find integers $n$ and $d$ for which it holds that $4k^{2}\geq2(n-4)+2$ where
$k=\frac{1}{2}n-d-\frac{5}{2}.$ By simple calculation it is easy to establish
that the inequality holds for $d\leq\frac{1}{2}(n-\sqrt{2n-6}-5)$ so the lema
is proved.
\end{proof}

Using Lemma \ref{Lema_WienerGr3} it is easy to establish that
\[
W(G_{3}(n,d,x-1,s))-W(G_{3}(n,d,x,s))=2(n-2x+1)
\]
which is divisible by $4$ since $n$ is odd. Moreover, note that for
$x_{3}^{\max}=\frac{4+4d-(n-1)}{2}$ it holds that
\[
G_{3}(n,d,x_{3}^{\max},s)=G_{3}(n,d-1,1,s).
\]
Therefore we can use Lemma \ref{Lema_WienerGr3StepX} and "glue" together
intervals both on the border between $x$ and $x-1$ and on the border of $d$
and $d-1,$ so we will obtain one large interval which is Wiener $2-$complete
(because of Corollary \ref{Kor_Gr3Step}).

\bigskip

Again, here we have used $G_{3}(n,d,x,s)$ to the maximum, but we have covered
thus only caterpillars with large $d.$ Let us now use graph $B_{2}(n,d,x)$ to
create fourth special kind of caterpillars which we will use to widen our
interval to caterpillars with small $d.$

\begin{definition}
Let $n,d$ and $x$ be integers for which $B_{2}(n-1,d,x)$ is defined. For
$s=0,1$ caterpillar $G_{4}(n,d,x,s)$ is a caterpillar on odd number of
vertices $n,$ obtained from $B_{2}(n-1,d,x)$ by appending a leaf to vertex
$u_{s}$ of path $P=u_{-d}\ldots u_{-1}u_{0}u_{1}\ldots u_{d}$ in
$B_{2}(n-1,d,x).$
\end{definition}

\begin{lemma}
\label{Lema_WienerGr4}Let $n,d$, $x$ and $s$ be integers for which
$G_{4}(n,d,x,s)$ is defined. Then%
\[
W(G_{4}(n,d,x,s))=W(B_{2}(n-1,d,x))+(2+d)n-2d^{2}-3d-1+2s^{2}-2x.
\]

\end{lemma}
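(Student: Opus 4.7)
The strategy mirrors the proofs of Lemmas \ref{Lema_WienerGr2} and \ref{Lema_WienerGr3}. The caterpillar $G_4(n,d,x,s)$ is obtained from $B_2(n-1,d,x)$ by appending a single new leaf, call it $u'$, to the vertex $u_s$ of its interior path. The only distances that differ between $G_4$ and $B_2(n-1,d,x)$ are those from $u'$, and $d_{G_4}(u',w) = d_{B_2}(u_s,w)+1$ for every vertex $w$ of $B_2(n-1,d,x)$. Hence
\[
W(G_4(n,d,x,s)) = W(B_2(n-1,d,x)) + f(s),
\]
where $f(v)$ denotes the sum of $d_{B_2}(u_v,w)+1$ over all vertices $w$ of $B_2(n-1,d,x)$. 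Compared with Lemma \ref{Lema_WienerGr2}, only one leaf is appended instead of two, so no second-leaf contribution and no additive $d-s+2$ cross term appears.

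Next, I decompose the vertex set of $B_2(n-1,d,x)$ into the layers prescribed by its definition: the spine $u_{-d},\ldots,u_d$; the pendant leaves at $u_{-(k-1)},\ldots,u_{k-1}$ with $k=d-1$; the $x$ additional leaves at each of $u_{\pm(d-1)}$; and the $r$ leaves at each of $u_{\pm d}$, where $r = \frac{(n-1)-4d-2x+2}{2}$. This yields
\[
\begin{aligned}
f(v) &= \sum_{i=-d}^{d}(|v-i|+1) + \sum_{i=-(k-1)}^{k-1}(|v-i|+2) \\
&\quad + x(|v+(d-1)|+2) + x(|v-(d-1)|+2) \\
&\quad + r(|v+d|+2) + r(|v-d|+2),
\end{aligned}
\]
which is precisely the auxiliary function used in the proof of Lemma \ref{Lema_WienerGr2}.

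Finally, I evaluate $f$ at $v=s\in\{0,1\}$. Since $B_2$ is defined only for $d\geq 4$, so in particular $k-1\geq 2$, the value $s$ lies strictly inside every symmetric interval appearing in the absolute-value sums, and each block splits cleanly into two arithmetic progressions. A short computation shows that the symmetric blocks $\sum_{i=-d}^{d}|s-i|$ and $\sum_{i=-(k-1)}^{k-1}|s-i|$ each contribute $s^{2}$ of $s$-dependence (plus terms independent of $s$), while the four pendant blocks contribute only $s$-independent quantities, because $s+(d-1)$ and $s+d$ are positive whereas $s-(d-1)$ and $s-d$ are negative, so the pairs of absolute values collapse to the constants $2(d-1)$ and $2d$. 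Substituting $r = \frac{n-4d-2x+1}{2}$ and collecting terms reduces $f(s)$ to $(2+d)n - 2d^{2} - 3d - 1 + 2s^{2} - 2x$, as required. The only real obstacle is the routine but lengthy bookkeeping of this simplification; no structural difficulty arises.
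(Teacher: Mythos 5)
Your proof is correct and follows essentially the same route as the paper's: write the increment as the sum of distances from the single new leaf, decompose that sum over the spine, the middle pendant leaves, and the $x$- and $r$-blocks of $B_{2}(n-1,d,x)$, and simplify using $r=\frac{n-4d-2x+1}{2}$. If anything your bookkeeping is cleaner, since the paper's displayed sum carries a stray $(s-x^{\prime}+2)$ term apparently left over from the $G_{3}$ computation, while your decomposition and the resulting formula check out exactly.
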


\begin{proof}
Let $k=d-1$ and $r=\frac{(n-1)-4d-2x+2}{2}.$ The definition of $G_{4}%
(n,d,x,s)$ implies%
\begin{align*}
W(G_{4}(n,d,x,s))  &  =W(B_{2}(n-1,d,x))+\sum_{i=-d}^{d}(\left\vert
s-i\right\vert +1)+\\
&  +\sum_{i=-(k-1)}^{k-1}(\left\vert s-i\right\vert +2)+(s-x^{\prime}+2)+\\
&  +2x(d+1)+2r(d+2).
\end{align*}
Simplifying this expression yields the result.
\end{proof}

\begin{corollary}
\label{Kor_gr4step}It holds that $W(G_{4}(n,d,x,1))=W(G_{4}(n,d,x,0))+2.$
\end{corollary}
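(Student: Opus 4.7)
The plan is to derive Corollary \ref{Kor_gr4step} as a one-line consequence of Lemma \ref{Lema_WienerGr4}. That lemma expresses
\[
W(G_{4}(n,d,x,s))=W(B_{2}(n-1,d,x))+(2+d)n-2d^{2}-3d-1+2s^{2}-2x,
\]
and the crucial observation is that $s$ appears on the right-hand side only through the summand $2s^{2}$. So I would substitute $s=1$ and $s=0$ in turn and subtract: every term other than $2s^{2}$ is independent of $s$ and cancels, leaving $2\cdot 1^{2}-2\cdot 0^{2}=2$, which is exactly the claimed identity.

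It is worth noting that this mirrors Corollary \ref{Kor_Gr3Step}, whose defining formula from Lemma \ref{Lema_WienerGr3} also contains $2s^{2}$ as its sole $s$-dependent term, and that this differs from the even-$n$ corollaries \ref{Kor_Gr1step} and \ref{Kor_Gr2Step}, where an additional linear contribution $-s$ was present. The reason for the cleaner dependence in the odd-$n$ case is structural: in the constructions $G_{1}$ and $G_{2}$ an extra leaf was appended to the fixed endpoint $u_{d}$ in addition to the movable leaf at $u_{s}$, so the distance from that fixed leaf to the movable one introduced a term $d-s$; in $G_{3}$ and $G_{4}$ only the single leaf at $u_{s}$ is appended, and the linear pieces in $s$ coming from the symmetric arrangement of leaves around $u_{0}$ cancel, leaving only the quadratic piece.

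Consequently there is no real obstacle: all the work has already been carried out in the proof of Lemma \ref{Lema_WienerGr4}, and the corollary follows by direct substitution. One should merely double-check that both values $s=0$ and $s=1$ are admissible in the range for which $G_{4}(n,d,x,s)$ is defined, which is immediate from the definition.
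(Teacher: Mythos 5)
Your proposal is correct and matches the paper's intent exactly: the corollary is stated as an immediate consequence of Lemma \ref{Lema_WienerGr4}, and since $2s^{2}$ is the only $s$-dependent term in that formula, subtracting the $s=0$ case from the $s=1$ case gives $2$. Your additional structural remarks about why no linear $-s$ term appears here (unlike in Corollaries \ref{Kor_Gr1step} and \ref{Kor_Gr2Step}) are accurate but not needed for the argument.
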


Let us now apply Theorem \ref{Tm_CaterpillarWienerComplete} on $G_{4}%
(n,d,x,s).$

\begin{lemma}
\label{Lema_Gr4WienerKcomplete}Let $n,d$, $x$ and $s$ be integers for which
$G_{4}(n,d,x,s)$ is defined. For $k=d-2$ the interval $[W(G_{4}%
(n,d,x,s)),W(G_{4}(n,d,x,s))+4k^{2}]$ is Wiener $4-$complete.
\end{lemma}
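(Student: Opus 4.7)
The plan is to mirror, essentially verbatim, the structure of Lemma \ref{Lema_Gr1WienerKcomplete}, Lemma \ref{Lema_Gr2WienerKcomplete} and Lemma \ref{Lema_WienerGr3Kcomplete}, since $G_{4}(n,d,x,s)$ is built from $B_{2}(n-1,d,x)$ in exactly the same way that $G_{3}$ was built from $B_{1}$ (namely, by appending a single leaf to the interior path vertex $u_{s}$ with $s\in\{0,1\}$). The proof therefore reduces to identifying the correct ``run length'' of consecutive interior path vertices of degree $3$ around a chosen degree $4$ vertex, and then invoking Theorem \ref{Tm_CaterpillarWienerComplete}.

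First, I would recall the construction of $B_{2}(n-1,d,x)$: a leaf is appended to each of the $2k_{1}-1$ consecutive interior path vertices $u_{-(k_{1}-1)},\ldots,u_{k_{1}-1}$, where $k_{1}=d-1$, together with $x$ leaves at $u_{\pm(d-1)}$ and $r$ leaves at $u_{\pm d}$. Attaching the extra leaf at $u_{s}$ in passing from $B_{2}(n-1,d,x)$ to $G_{4}(n,d,x,s)$ gives $u_{s}$ degree exactly $4$ (two neighbors along the interior path, plus the original leaf on $u_{s}$, plus the new leaf), while the vertices $u_{\pm j}$ for $1\le j\le k_{1}-1$ that were not chosen as $u_{s}$ still have degree $3$ (two path-neighbors plus their single appended leaf).

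Next, I would set $k_{1}=d-1$ and note that, because $s\leq 1$, starting from the degree $4$ vertex $u_{s}$ the longest guaranteed run of consecutive degree $3$ vertices on one side of $u_{s}$ reaches only up to $u_{k_{1}-1}$; this costs one unit compared to the ``ideal'' run length $k_{1}$, so the hypothesis of Theorem \ref{Tm_CaterpillarWienerComplete} is satisfied with $k=k_{1}-1=d-2$ (this is exactly the same $s\le 1$ accounting that appeared in the proof of Lemma \ref{Lema_WienerGr3Kcomplete}, whereas in Lemmas \ref{Lema_Gr1WienerKcomplete} and \ref{Lema_Gr2WienerKcomplete} a slack of $2$ was needed because there $s\le 2$). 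Applying Theorem \ref{Tm_CaterpillarWienerComplete} to this configuration then yields that $[W(G_{4}(n,d,x,s)),W(G_{4}(n,d,x,s))+4k^{2}]$ is Wiener $4$-complete.

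The only potential obstacle is the careful bookkeeping that the vertex $u_{s}$ really has degree $4$ rather than $5$ (for instance, one must check that $|s|\leq k_{1}-1$, so that $u_{s}$ is one of the interior vertices which already carries exactly one appended leaf in $B_{2}$) and that none of the candidate neighbors $u_{s\pm j}$ with $1\le j\le k-1$ happens to coincide with a vertex of degree $\neq 3$ (e.g.\ $u_{\pm(d-1)}$, which carries $x$ leaves). Since $k-1=d-3<d-2\leq k_{1}-1$, the vertices $u_{s\pm j}$ stay strictly inside $\{u_{-(k_{1}-1)},\ldots,u_{k_{1}-1}\}$, avoiding both $u_{\pm(d-1)}$ and $u_{\pm d}$, and the required degree condition is automatic. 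This completes the proposed proof.
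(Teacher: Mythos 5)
Your proposal is correct and follows essentially the same route as the paper: identify $k_{1}=d-1$ as the run length of interior path vertices carrying one appended leaf in $B_{2}(n-1,d,x)$, observe that appending the extra leaf at $u_{s}$ with $s\leq1$ creates a degree $4$ vertex whose guaranteed run of surrounding degree $3$ vertices is $k_{1}-1=d-2$, and invoke Theorem \ref{Tm_CaterpillarWienerComplete}. The paper's own proof is just a terser version of this; your additional bookkeeping (checking that the $u_{s\pm j}$ avoid $u_{\pm(d-1)}$ and $u_{\pm d}$) is a welcome elaboration rather than a deviation.
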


\begin{proof}
Let us denote $k_{1}=d-1.$ Note that $k_{1}$ is the half of the number of
leafs appended to the vertices $u_{\pm j}$ of the interior path of
$G_{3}(n,d,x,s)$ for $j=0,\ldots,k-1.$ Since $s\leq1,$ note that the
definition of $G_{4}(n,d,x,s)$ and Theorem \ref{Tm_CaterpillarWienerComplete}
imply the result for $k=k_{1}-1.$
\end{proof}

Now we can establish the minimum value of $d$ for which the difference between
Wiener index of $G_{4}(n,d,x,s)$ and $G_{4}(n,d,x-1,s)$ can be "covered" by
Transformation A.

\begin{lemma}
\label{Lema_WienerGr4StepX}Let $n,d$, $x\geq2$ and $s$ be integers for which
$G_{4}(n,d,x,s)$ is defined. For $d\geq\frac{1}{2}(\sqrt{2n-6}+4)$ the
interval%
\[
\lbrack W(G_{4}(n,d,x,s)),W(G_{4}(n,d,x-1,s))]
\]
is Wiener $4-$complete.
\end{lemma}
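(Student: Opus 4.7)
My plan is to mirror the structure of Lemma \ref{Lema_WienerGr3StepX} exactly, since $G_4$ plays for small $d$ the same role that $G_3$ played for large $d$: we bound the $x$-step in Wiener index, compare it to the Wiener $4$-complete window granted by Lemma \ref{Lema_Gr4WienerKcomplete}, and solve the resulting quadratic inequality in $d$.

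\textbf{Step 1 (bounding the $x$-step).} Using Lemma \ref{Lema_WienerGr4} together with the closed form for $W(B_2(n-1,d,x))$, I would compute
\[
W(G_{4}(n,d,x-1,s)) - W(G_{4}(n,d,x,s))
\]
and observe that, as a function of $x$, it is linear and increasing, so its maximum over admissible $x\ge 2$ is attained at the largest permitted value $x_{4}^{\max}=\tfrac{(n-1)-4d+2}{2}$ (the same boundary that appeared for $G_2$). A direct simplification should then give that this maximum equals $2(n-4)+2$, matching exactly the bound that came out in the $G_3$ case. This is the step most likely to hide a bookkeeping slip, because the $B_2$ formula carries cubic-in-$d$ terms and one must check carefully that all $d$-terms in the difference cancel so that the maximum depends only on $n$.

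\textbf{Step 2 (applying the $K$-complete interval).} Lemma \ref{Lema_Gr4WienerKcomplete} tells me that starting from $W(G_4(n,d,x,s))$ I can fill a Wiener $4$-complete interval of length $4k^2$ with $k=d-2$. Provided the gap from Step 1 is at most $4k^2$, every admissible target value in $[W(G_4(n,d,x,s)),W(G_4(n,d,x-1,s))]$ is reachable, and this is what is required to call the interval Wiener $4$-complete.

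\textbf{Step 3 (solving for $d$).} The condition $4(d-2)^2 \ge 2(n-4)+2 = 2n-6$ rearranges to $d-2 \ge \tfrac{\sqrt{2n-6}}{2}$, i.e.\ $d \ge \tfrac{1}{2}(\sqrt{2n-6}+4)$, which is exactly the hypothesis of the lemma. I do not anticipate any serious obstacle beyond verifying the constant $2(n-4)+2$ in Step 1; once that is in hand the argument is a verbatim analogue of the $G_3$ proof, with $k=d-2$ in place of $k=\tfrac{1}{2}n-d-\tfrac{5}{2}$.
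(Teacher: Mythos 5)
Your proposal is correct and follows essentially the same route as the paper's own proof: bound the $x$-step by its maximum $2(n-4)+2$ attained at $x_{4}^{\max}$ (the difference $W(G_{4}(n,d,x-1,s))-W(G_{4}(n,d,x,s))=4(x+2d-2)$ is indeed increasing in $x$, unlike the $G_{3}$ case), compare with the window $4k^{2}$, $k=d-2$, from Lemma \ref{Lema_Gr4WienerKcomplete}, and solve the quadratic inequality. In fact your Step 3 lands on the correct conclusion $d\geq\tfrac{1}{2}(\sqrt{2n-6}+4)$, whereas the paper's proof ends with the condition $d\leq\tfrac{1}{2}(n-\sqrt{2n-6}-5)$ evidently copied from the $G_{3}$ lemma, so your version actually repairs that slip.
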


\begin{proof}
First note that for $x_{4}^{\max}=\frac{(n-1)-4d+2}{2}$ it holds that%
\begin{align*}
W(G_{4}(n,d,x-1,s))-W(G_{4}(n,d,x,s)) &  \leq W(G_{4}(n,d,x_{4}^{\max
}-1,s))-W(G_{4}(n,d,x_{4}^{\max},s))=\\
&  =2(n-4)+2.
\end{align*}
Therefore, Lemma \ref{Lema_Gr4WienerKcomplete} implies it is sufficient to
find integers $n$ and $d$ for which it holds that $4k^{2}\geq2(n-4)+2$ where
$k=d-2.$ By simple calculation it is easy to establish that the inequality
holds for $d\leq\frac{1}{2}(n-\sqrt{2n-6}-5)$ so the lema is proved.
\end{proof}

Using Lemma \ref{Lema_WienerGr4} it is easy to establish that
\[
W(G_{4}(n,d,x-1,s))-W(G_{4}(n,d,x,s))=4(x+2d-2)
\]
which is divisible by $4$. Moreover, note that for $x_{4}^{\max}%
=\frac{(n-1)-4d+2}{2}$ it holds that
\[
G_{4}(n,d,x_{4}^{\max},s)=G_{4}(n,d-1,1,s).
\]
Therefore we can use Lemma \ref{Lema_WienerGr4StepX} and "glue" together
intervals both on the border between $x$ and $x-1$ and on the border of $d$
and $d-1,$ so we will obtain one large interval which is Wiener $2-$complete
(because of Corollary \ref{Kor_gr4step}).

Finally, noting that for $d_{3}^{\min}=\left\lceil \frac{n-3}{4}\right\rceil
$, $x_{3}^{\max}=\frac{4+4d-(n-1)}{2}$ and $d_{4}^{\max}=\left\lfloor
\frac{n-1}{4}\right\rfloor $ it holds that%
\[
G_{3}(n,d_{3}^{\min},x_{3}^{\max},s)=G_{4}(n,d_{4}^{\max},1,s),
\]
we conclude that we can "glue" together two large Wiener $2-$complete
intervals we obtained (one for large values of $d$ and the other for small
values of $d$), and thus prove our main result.

\begin{theorem}
\label{Tm_MAINoddN}Let $n\geq21$, $d_{3}^{\max}=\frac{1}{2}(n-\sqrt{2n-6}-5)$,
$d_{4}^{\min}=\frac{1}{2}(\sqrt{2n-6}+4)$ and $x_{4}^{\max}=\frac
{(n-1)-4d_{4}^{\min}+2}{2}$ The interval%
\[
\lbrack W(G_{4}(n,d_{4}^{\min},x_{4}^{\max},0)),W(G_{3}(n,d_{3}^{\max},1,1))]
\]
is Wiener $2-$complete.
\end{theorem}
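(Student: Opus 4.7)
The plan is to follow exactly the template established for the even case (Theorem \ref{Tm_MAINevenN}), now using the odd-$n$ building blocks $G_{3}$ and $G_{4}$ introduced in this section. Two Wiener $2$-complete intervals have essentially been constructed in the discussion preceding the statement, one on the ``large $d$'' side from $G_{3}$ and one on the ``small $d$'' side from $G_{4}$; what remains is to glue them along the common graph given by the identity $G_{3}(n,d_{3}^{\min},x_{3}^{\max},s)=G_{4}(n,d_{4}^{\max},1,s)$.

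First I would fix $s\in\{0,1\}$ and build the large-$d$ side as follows. For each admissible $d$ with $d\leq d_{3}^{\max}$, Lemma \ref{Lema_WienerGr3StepX} gives that $[W(G_{3}(n,d,x,s)),W(G_{3}(n,d,x-1,s))]$ is Wiener $4$-complete for every $x\geq 2$. From Lemma \ref{Lema_WienerGr3} one computes $W(G_{3}(n,d,x-1,s))-W(G_{3}(n,d,x,s))=2(n-2x+1)$, which is a multiple of $4$ because $n$ is odd; hence these $4$-complete intervals glue across consecutive values of $x$. The identity $G_{3}(n,d,x_{3}^{\max},s)=G_{3}(n,d-1,1,s)$ further glues them across consecutive values of $d$, yielding, for each $s$, a single $4$-complete interval running from $W(G_{3}(n,d_{3}^{\min},x_{3}^{\max},s))$ up to $W(G_{3}(n,d_{3}^{\max},1,s))$. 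By Corollary \ref{Kor_Gr3Step} the interval for $s=1$ is the shift by $2$ of the interval for $s=0$, so their union is a Wiener $2$-complete interval $I_{3}$ whose right endpoint is $W(G_{3}(n,d_{3}^{\max},1,1))$.

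The small-$d$ side is handled by the identical scheme, now with Lemma \ref{Lema_WienerGr4StepX}, the identity $G_{4}(n,d,x_{4}^{\max},s)=G_{4}(n,d-1,1,s)$, and Corollary \ref{Kor_gr4step}. This yields a Wiener $2$-complete interval $I_{4}$ with left endpoint $W(G_{4}(n,d_{4}^{\min},x_{4}^{\max},0))$ and right endpoint $W(G_{4}(n,d_{4}^{\max},1,1))$. The bridging identity $G_{3}(n,d_{3}^{\min},x_{3}^{\max},s)=G_{4}(n,d_{4}^{\max},1,s)$ supplies a common graph for each $s\in\{0,1\}$, so the right end of $I_{4}$ coincides with the left end of $I_{3}$, and $I_{3}\cup I_{4}$ is the single Wiener $2$-complete interval claimed in the theorem.

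The main obstacle I expect is purely arithmetic bookkeeping rather than any new idea: verifying that the thresholds $d_{3}^{\max}$ and $d_{4}^{\min}$ coming from Lemmas \ref{Lema_WienerGr3StepX} and \ref{Lema_WienerGr4StepX} really leave enough room for at least one admissible $d$ on each side and for the two regimes to actually reach the shared graph (this is where the hypothesis $n\geq 21$ should enter), and that the parity/divisibility-by-$4$ arithmetic holds uniformly at each of the many gluing points so that no integer gaps open. Once these checks are carried out, the assembly above goes through without further surprises.
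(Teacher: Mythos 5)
Your proposal is correct and follows essentially the same route as the paper, which proves this theorem only through the discussion preceding its statement: gluing the $x$-intervals via Lemma \ref{Lema_WienerGr3StepX} and Lemma \ref{Lema_WienerGr4StepX}, passing between consecutive $d$ via the identities $G_{3}(n,d,x_{3}^{\max},s)=G_{3}(n,d-1,1,s)$ and $G_{4}(n,d,x_{4}^{\max},s)=G_{4}(n,d-1,1,s)$, obtaining $2$-completeness from Corollaries \ref{Kor_Gr3Step} and \ref{Kor_gr4step}, and joining the two regimes at the common graph $G_{3}(n,d_{3}^{\min},x_{3}^{\max},s)=G_{4}(n,d_{4}^{\max},1,s)$. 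The arithmetic bookkeeping you flag at the end is indeed all that the paper leaves implicit as well.
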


\begin{corollary}
For odd $n\geq21$ it holds that $\left\vert W[\mathcal{T}_{n}]\right\vert
\geq\left\vert W^{int}[\mathcal{T}_{n}]\right\vert \geq\frac{1}{12}n^{3}%
-\frac{1}{2\sqrt{2}}n^{5/2}+O(n^{2}).$
\end{corollary}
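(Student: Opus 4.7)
The plan is to follow the same template used for the even-$n$ corollary. First I would invoke Theorem \ref{Tm_MAINoddN} to obtain the Wiener $2$-complete interval
\[
I=\bigl[W(G_{4}(n,d_{4}^{\min},x_{4}^{\max},0)),\,W(G_{3}(n,d_{3}^{\max},1,1))\bigr].
\]
By Theorem \ref{tm_WienerEven}, every tree on an odd number of vertices has even Wiener index, so being Wiener $2$-complete means exactly that \emph{every} even integer in $I$ is attained as $W(T)$ for some $T\in\mathcal{T}_{n}$. Hence $|W^{int}[\mathcal{T}_{n}]|$ is bounded below by the number of even integers in $I$, which equals $\tfrac{1}{2}\bigl(W(G_{3}(n,d_{3}^{\max},1,1))-W(G_{4}(n,d_{4}^{\min},x_{4}^{\max},0))\bigr)+1$.

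Next I would compute both endpoints in closed form. For the upper endpoint I would apply Lemma \ref{Lema_WienerGr3} with $d=d_{3}^{\max}$, $x=1$, $s=1$, and substitute Lemma \ref{lema_WienerBasic1} for $W(B_{1}(n-1,d_{3}^{\max},1))$; with $d_{3}^{\max}=\tfrac{1}{2}(n-\sqrt{2n-6}-5)$, this is a rational expression in $n$ and $\sqrt{2n-6}$ whose dominant term is $\tfrac{n^{3}}{6}$ (coming from the cubic in $d$ in Lemma \ref{lema_WienerBasic1} once $d$ is nearly $\tfrac{n}{2}$). For the lower endpoint I would apply Lemma \ref{Lema_WienerGr4} with $d=d_{4}^{\min}$, $x=x_{4}^{\max}$, $s=0$, and substitute the closed form for $W(B_{2}(n-1,d_{4}^{\min},x_{4}^{\max}))$; since $d_{4}^{\min}=\tfrac{1}{2}(\sqrt{2n-6}+4)$ is small, this endpoint is of order $n^{2}$ and its contribution to the leading asymptotics is negligible.

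The main step is the subtraction. All cubic and lower polynomial pieces in $n$ combine so that the leading $\tfrac{n^{3}}{6}$ survives; the $n^{5/2}$-order terms produced by the $\sqrt{2n-6}$ factors inside $d_{3}^{\max}$ (and, to a lesser extent, inside $x_{4}^{\max}$) combine to $-\tfrac{1}{\sqrt{2}}\,n^{5/2}$, exactly mirroring the even-$n$ computation; everything else is $O(n^{2})$. Dividing by $2$ and adding $1$ yields the announced bound $\tfrac{1}{12}n^{3}-\tfrac{1}{2\sqrt{2}}n^{5/2}+O(n^{2})$.

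The only obstacle worth flagging is the bookkeeping required to replace $d_{3}^{\max}$ and $d_{4}^{\min}$ by their nearest integers (respectively $\lfloor\cdot\rfloor$ and $\lceil\cdot\rceil$), as needed in order to fit the integrality hypotheses of Lemmas \ref{Lema_WienerGr3StepX} and \ref{Lema_WienerGr4StepX}. Such an adjustment perturbs $d$ by $O(1)$, which perturbs the Wiener index by $O(n^{2})$ — absorbed harmlessly into the $O(n^{2})$ error term. Beyond that, the proof is pure algebra analogous to the even-$n$ case and presents no essentially new difficulty.
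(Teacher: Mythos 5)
Your proposal follows the paper's route exactly: invoke Theorem \ref{Tm_MAINoddN}, observe that Wiener $2$-completeness together with Theorem \ref{tm_WienerEven} means every even integer in the interval is attained, and then evaluate the two endpoints in closed form via Lemmas \ref{Lema_WienerGr3} and \ref{Lema_WienerGr4}; the paper's own proof is literally $\bigl(W(G_{3}(n,d_{3}^{\max},1,1))-W(G_{4}(n,d_{4}^{\min},x_{4}^{\max},0))\bigr)/2$ computed explicitly. One factual slip in your asymptotic bookkeeping should be corrected, though. The lower endpoint $W(G_{4}(n,d_{4}^{\min},x_{4}^{\max},0))$ is \emph{not} of order $n^{2}$: since $d_{4}^{\min}\sim\sqrt{n/2}$ and $x_{4}^{\max}\sim n/2$ (which forces $r=0$), this graph is essentially a double broom with two clusters of about $n/2$ leaves whose mutual distances are about $2d_{4}^{\min}\sim\sqrt{2n}$, so its Wiener index is $\tfrac{1}{2\sqrt{2}}n^{5/2}+O(n^{2})$. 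The upper endpoint is $\tfrac{n^{3}}{6}-\tfrac{1}{2\sqrt{2}}n^{5/2}+O(n^{2})$, so the two endpoints contribute \emph{equally} to the total $-\tfrac{1}{\sqrt{2}}n^{5/2}$ that you (correctly) announce two sentences later; as written, your claim that the lower endpoint is negligible is inconsistent with that total, and taken literally it would yield $-\tfrac{1}{4\sqrt{2}}n^{5/2}$ after halving rather than $-\tfrac{1}{2\sqrt{2}}n^{5/2}$. Because the corollary asserts only a lower bound, this slip does not sink the argument once the subtraction is actually carried out, and your remark about rounding $d_{3}^{\max}$ and $d_{4}^{\min}$ to integers (a perturbation absorbed by the $O(n^{2})$ term) is a legitimate point that the paper glosses over; but the description of where the $n^{5/2}$ term comes from needs to be fixed before the computation "presents no difficulty."
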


\begin{proof}
Using Theorem \ref{Tm_MAINevenN} and Lemmas \ref{Lema_WienerGr1} and
\ref{Lema_WienerGr2} it is easy to calculate that
\begin{align*}
\left\vert W^{int}[\mathcal{T}_{n}]\right\vert  &  \geq(W(G_{3}(n,d_{3}^{\max
},1,s))-W(G_{4}(n,d_{4}^{\min},x_{4}^{\max},s)))/2=\\
&  =\frac{n^{3}}{12}-\frac{\sqrt{n^{5}-3n^{4}}}{2\sqrt{2}}-n^{2}+\frac{5}%
{3}\sqrt{2n^{3}-6n^{2}}+\frac{83n}{12}+\frac{11\sqrt{n-3}}{6\sqrt{2}}-13.
\end{align*}

\end{proof}

We can now prove Conjectures \ref{Con1} and \ref{Con2} we stated in the
introduction (to be more precise - prove the adjusted version of conjectures).
Namely, Theorem \ref{tm_WienerEven} implies $\left\vert W^{int}[\mathcal{T}%
_{n}]\right\vert \leq\left\vert W[\mathcal{T}_{n}]\right\vert \leq
(W(P_{n})-W(S_{n}))/2=\frac{1}{12}n^{3}-\frac{1}{2}n^{2}+\frac{11}{12}%
n-\frac{1}{2}$. Therefore the following theorem is proved.

\begin{theorem}
For odd $n\geq21$ it holds that $\left\vert W^{int}[\mathcal{T}_{n}%
]\right\vert =\left\vert W[\mathcal{T}_{n}]\right\vert =\frac{1}{12}%
n^{3}+O(n^{2})$.
\end{theorem}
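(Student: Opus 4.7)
The plan is a short sandwich argument: combine the lower bound just produced in the preceding corollary with a parity-based upper bound on $|W[\mathcal{T}_n]|$. The corollary immediately above supplies
\[
|W^{int}[\mathcal{T}_n]| \geq \tfrac{1}{12}n^3 - \tfrac{1}{2\sqrt{2}}n^{5/2} + O(n^2),
\]
so all that remains is a matching upper bound of the form $|W[\mathcal{T}_n]| \leq \tfrac{1}{12}n^3 + O(n^2)$; once that is in hand, the trivial inclusion $W^{int}[\mathcal{T}_n] \subseteq W[\mathcal{T}_n]$ will close the argument.

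For the upper bound I would invoke Theorem \ref{tm_WienerEven}, which asserts that $W(G)$ is even whenever $G$ is a tree on an odd number of vertices. Together with the standard fact that the star $S_n$ and the path $P_n$ are the Wiener-extremal trees on $n$ vertices, this forces $W[\mathcal{T}_n]$ to be a subset of the even integers in the interval $[W(S_n), W(P_n)]$, and therefore
\[
|W[\mathcal{T}_n]| \leq \frac{W(P_n) - W(S_n)}{2} + 1.
\]
Substituting the closed forms $W(P_n) = \binom{n+1}{3}$ and $W(S_n) = (n-1)^2$ and simplifying yields $\tfrac{1}{12}n^3 - \tfrac{1}{2}n^2 + \tfrac{11}{12}n + \tfrac{1}{2}$, which is $\tfrac{1}{12}n^3 + O(n^2)$, as desired.

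Chaining the three estimates gives
\[
\tfrac{1}{12}n^3 + O(n^{5/2}) \leq |W^{int}[\mathcal{T}_n]| \leq |W[\mathcal{T}_n]| \leq \tfrac{1}{12}n^3 + O(n^2),
\]
which pins both cardinalities to the same leading cubic behaviour. There is really no obstacle to overcome at this stage: the heavy lifting was done in Theorem \ref{Tm_MAINoddN} (which produced a single large Wiener $2$-complete interval of the required cubic length) and in Theorem \ref{tm_WienerEven} (which halves the ambient counting range via parity); this final statement is simply the clean asymptotic conclusion that the two bounds meet. The only mild subtlety is bookkeeping with the error orders: the lower-bound slack is of order $n^{5/2}$ while the upper-bound slack is $O(n^2)$, so the $O(n^2)$ in the theorem statement should be read as encoding the common leading coefficient $\tfrac{1}{12}$, with the sub-cubic correction governed by the larger of the two error terms.
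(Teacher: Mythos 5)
Your proof is correct and takes exactly the paper's route: the lower bound from the preceding corollary combined with the parity-based upper bound $\left\vert W[\mathcal{T}_{n}]\right\vert \leq (W(P_{n})-W(S_{n}))/2$ derived from Theorem \ref{tm_WienerEven} and the extremality of $S_{n}$ and $P_{n}$, then a sandwich. You are in fact slightly more careful than the paper, which omits the $+1$ in counting even integers in $[W(S_{n}),W(P_{n})]$ and silently absorbs the $n^{5/2}$ lower-bound slack into the stated $O(n^{2})$; your closing remark about reading the theorem's error term as governed by the larger of the two slacks is the right way to interpret the statement.
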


\section*{Acknowledgments}

This work has been supported in part by Croatian Science Foundation under the
project 8481 (BioAmpMode) and Croatian-Chinese bilateral project
\textquotedblleft Graph-theoretical methods for nanostructures and
nanomaterials\textquotedblright.


\begin{thebibliography}{9}                                                                                                %


\bibitem {BanBeregMustafa(2004)}Y. A. Ban, S. Bereg, N. H. Mustafa, On a
conjecture on Wiener indices in combinatorial chemistry, Algorithmica 40
(2004) 99--117.

\bibitem {GutmanYeh(1995)}I. Gutman, Y. N. Yeh, The sum of all distances in
bipartite graphs, Math. Slovaca 45 (1995) 327-334.

\bibitem {KnorSkrekovskiTepeh(2016)}M. Knor, R. \v{S}krekovski, A. Tepeh,
Mathematical aspects of Wiener index, Ars math. contemp. 11. (2016) 327-352.

\bibitem {KrncSkrekovski(2016)}M. Krnc and R. \v{S}krekovski, On Wiener
Inverse Interval Problem, MATCH Commun. Math. Comput. Chem. 75 (2016) 71--80.

\bibitem {LepovicGutman(1998)}M. Lepovic, I. Gutman, A collective property of
trees and chemical trees, J. Chem. Inf. Comput. Sci. 38 (1998) 823--826.

\bibitem {GutmanSurvey(2014)}K. Xu, M. Liu, K. C. Das, I. Gutman, B. Furtula,
A survey on graphs extremal with respect to distance based topological
indices, MATCH Commun. Math. Comput. Chem. 71 (2014) 461-508.

\bibitem {Wagner(2006)}S. G. Wagner, A class of trees and its Wiener index,
Acta Appl. Math. 91 (2006) 119-132.

\bibitem {Wang(2006)}H. Wang, G. Yu, All but 49 numbers are Wiener indices of
trees, Acta Appl. Math. 92 (2006) 15-20.

\bibitem {Wiener(1946)}H. Wiener, Structural determination of para n boiling
points, J. Am. Chem. Soc. 69 (1947) 17-20.
\end{thebibliography}
\end{document}